\newtheorem{theorem}{Theorem}[section]
\newtheorem{lemma}[theorem]{Lemma}
\newtheorem{proposition}[theorem]{Proposition}
\newtheorem{corollary}[theorem]{Corollary}
\theoremstyle{definition}
\newtheorem{example}[theorem]{Example}
\theoremstyle{remark}
\newtheorem{remark}[theorem]{Remark}
\numberwithin{equation}{section}
\newcommand{\la}{\left \langle}
\newcommand{\ra}{\right \rangle}
\newcommand{\lb}{\left\|}
\newcommand{\rb}{\right\|}
\newcommand\N{\mathbb{N}}
\newcommand\C{\mathbb{C}}
\newcommand\J{\mathbb{J}}
\newcommand\Z{\mathbb{Z}}
\newcommand\cH{\mathcal{H}}
\newcommand\cF{\mathcal{F}}
\newcommand{\spn}{\text{span}}
\newcommand{\sinc}{\text{sinc}}
\newcommand{\cspn}{\overline{\text{span}}}
\newcommand{\rank}{\text{rank}}
\newcommand{\ip}[2]{\left\langle\,#1 ,  #2 \, \right\rangle}
\begin{document}


\title{Signal Reconstruction from Frame and Sampling Erasures}

\author{David Larson}
\address{Department of Mathematics, Texas A\&M University, College Station, USA}
\email{larson@math.tamu.edu}

\author{Sam Scholze}
\address{Department of Mathematics, Texas A\&M University, College Station, USA}
\email{scholzes@math.tamu.edu}

\date{}

\subjclass[2010]{Primary 46G10, 46L07, 46L10, 46L51,
47A20;\\Secondary 42C15, 46B15, 46B25, 47B48}

\keywords{finite frame, omission, erasure, reconstruction, bridging}

\begin{abstract}

We give some new methods for perfect reconstruction from frame and sampling erasures in finitely many steps.  By bridging an erasure set we mean replacing the erased Fourier coefficients of a function with respect to a frame by appropriate linear combinations of the non-erased coefficients.  We prove that if a minimal redundancy condition is satisfied bridging can always be done to make the reduced error operator nilpotent of index 2 using a bridge set of indices no larger than the cardinality of the erasure set.  This results in perfect reconstruction of the erased coefficients in one final matricial step.  We also obtain a new formula for the inverse of an invertible partial reconstruction operator. This leads to a second method of perfect reconstruction from frame and sampling erasures in finitely many steps. This gives an alternative to the bridging method for many (but not all) cases. The methods we use employ matrix techniques only of the order of
the cardinality of the erasure set, and are applicable to rather large finite erasure sets for infinite frames and sampling schemes as well as for finite frame theory. Some new classification theorems for frames are obtained and some new methods of measuring redundancy are introduced based on our bridging theory.

\end{abstract}

\maketitle


\bigskip


\section{Introduction}

Frame and sampling techniques are often used to analyze and digitize signals and images when they are represented as vectors or functions in a Hilbert space.  There is a large literature on the pure and applied mathematics of this subject (c.f. \cite{BBCE}, \cite{CKL}, \cite{Ch}, \cite{DS}, \cite{HL}).  A number of articles have been written on problems and methods for reconstruction from erasures (c.f. \cite{BP}, \cite{CK}, \cite{CK2}, \cite{GKK}, \cite{HP}).  We give some new methods for perfect reconstruction from frame and sampling erasures in finitely many steps.

Let $\{f_j\}$ be a Parseval frame for a Hilbert space $\mathcal{H}$, or more generally let $\{f_j,g_j\}$ be a dual pair of frames.  (See definitions below.)  Let $f$ be a vector in $\mathcal{H}$, and let $\Lambda$ be a finite subset of the index set. If $f$ is analyzed with $\{g_j\}$ and if the frame coefficients for $\Lambda$ are erased, then by bridging the erasures we mean replacing the erased coefficients with appropriate linear combinations of the non-erased coefficients. We show that bridging can always be done to make the resulting reduced error operator nilpotent of index 2 using a bridge set no larger than the cardinality of the erasure set.  From this, an algorithm for perfect reconstruction from erasures follows in one final simple step.  The resulting algorithms use only finite matrix methods of order the cardinality of the erasure set.
Frames can be infinite, such as Gabor and wavelet frames.  The only delimiter in a computational sense seems to be the size of the erasure set, which we take to be finite in this article.  This method adapts equally well to sampling theory, such as Shannon-Whittaker sampling theory (\cite{BF}, \cite{G}, \cite{Z}).  Our bridging results suggest some new classification techniques and new measures of redundancy for finite and infinite frames and sampling schemes.  We conclude with an apparently new formula for inverting the partial reconstruction operator.

We would like to thank Deguang Han for useful discussions on this work, and for piquing our interest in frame erasure problems in the recent interesting article \cite{PHM}.  We thank Stephen Rowe for useful Matlab and programming advice in the experimental phases of this work.  Many of our mathematical results were obtained after numerous computer experiments.

\section{Preliminaries}

A \textit{frame} $F$ for a Hilbert space $\cH$ is a sequence of vectors $\{f_j \} \subset \cH$ indexed by a finite or countable index set $\J$ for which there exist constants $0<A\leq B<\infty$ such that, for every $f \in \cH$,

\begin{equation}\label{frame}
 A\|f\|^2 \leq \sum_{j\in\J} |\ip{f}{f_j}|^2 \leq B\|f\|^2.
\end{equation}

The optimal constants are known as the upper and lower \textit{frame bounds}. A frame is called  \textit{tight} if $A=B$,  and is called a \textit{Parseval frame} if $A=B=1$. If we only require that a sequence $\{f_{j}\}$ satisfies the upper bound condition in (\ref{frame}), then $\{f_{j}\}$ is called a {\it Bessel sequence}. A frame which is a basis is called a Riesz basis. Orthonormal bases are special cases of Parseval frames. A Parseval frame $\{f_j\}$ for a Hilbert space $\cH$ is an orthonormal basis if and only if each $f_j$ is a unit vector.

The {\it analysis operator} $\Theta$  for a Bessel sequence $\{ f_j \}$ is a bounded linear operator from $\cH$ to $\ell^{2}(\mathbb{J})$ defined by
\begin{equation}\label{frameDef} \Theta f = \sum_{j \in \J}\ip{f}{f_{j}}e_{j}, \end{equation}
where $\{e_{j}\}$ is the standard orthonormal basis for $\ell^{2}(\mathbb{J})$. It is easily verified that
$$ \Theta^{*}e_{j} = f_{j}, \ \ \forall j \in \J. $$
The Hilbert space adjoint $\Theta^{*}$ is called the {\it synthesis operator} for $\{f_{j}\}$. The positive operator $S:=\Theta^{*}\Theta:\cH \rightarrow \cH$ is called the {\it frame operator}, or sometimes the {\it Bessel operator} if the Bessel sequence is not a frame, and we have
\begin{equation}\label{frameOp}
Sf = \sum_{j \in \J}\ip{f}{ f_{j}}f_{j},  \ \ \ \forall f \in \cH.
\end{equation}
We can also use rank one operator notation $(x \otimes y)(z) = \la z,y \ra x$ to write (\ref{frameOp}) as
$$ S = \sum_{j \in \J} f_j \otimes f_j. $$
Similarly, $\Theta = \sum_{j \in \J} e_j \otimes f_j$ and $\Theta^* = \sum_{j \in \J} e_j \otimes f_j$. The operator $\Theta \Theta^*: \ell^2(\J) \rightarrow \ell^2(\J)$ is called the Gramian operator (or Gram Matrix) and is denoted $Gr(F)$.  Then
$$Gr(F) = \sum_{j,k \in \J} \la f_k,f_j \ra e_j \otimes e_k = (\la f_k,f_j \ra)_{j, k}.$$

From (\ref{frameOp}) we obtain the {\it reconstruction formula (or frame decomposition)}
$$f = \sum_{j \in \J}\ip{f}{S^{-1}f_{j}}f_{j} = \sum_{j \in \J}\ip{f}{f_{j}}S^{-1}f_{j} \ \ \forall f \in \cH$$
where the convergence is in the norm of $\cH$.  The frame $\{S^{-1}f_{j}\}$ is called the {\it canonical or standard dual} of $\{f_{j} \}$. In the case that $\{f_{j}\}$ is a Parseval frame for $\cH$, we have $S = I$ and hence $ f = \sum_{j \in \J}\ip{f}{f_{j}}f_{j}, \ \ \forall f \in \cH. $ More generally, if a Bessel sequence $\{g_{j}\}$ satisfies a reconstruction formula
\[ f = \sum_{j \in \J}\ip{f}{g_{j}}f_{j} \ \ \forall f \in \cH \]
then $\{g_{j}\}$ is called an {\it alternate dual } of $\{f_{j} \}$.  (Hence $\{g_{j} \}$ is also necessarily a frame.) The canonical and alternate duals are often simply referred to as {\it duals}, and $(F,G):=\{f_{j}, g_{j} \}_{j \in \J}$ is called a {\it dual frame pair.}  The second frame $G$ in the ordered pair will be called the analysis frame and the first frame $F$ will be called the synthesis frame.

It will be convenient to define a {\it frame pair} which is not necessarily a dual frame pair  to be simply a pair of frames $F = \{ f_j \}$ and $G = \{ g_j \}$ indexed by the same set $\mathbb{J}$ for which the operator $\tilde{S}f = \sum\ip{f }{g_{j}}f_{j}$ is invertible. We will call the operator $\tilde{S}$ the {\it cross frame operator} for $F$ and $G$, and the operator $Gr(F,G) = \sum \ip{f_k}{g_j} e_j \otimes e_k$ the {\it cross Gramian}.  If $\{ f_1, \cdots, f_L \}$ and $\{ g_1, \cdots, g_L \}$ are finite sets of vectors, we will write $G(\{ f_1, \cdots, f_L \},\{ g_1, \cdots, g_L \})$ for the cross Gram matrix,
\begin{equation}
G(\{ f_1, \cdots, f_L \},\{ g_1, \cdots, g_L \}) = \left( \la f_k,g_j \ra \right)_{j,k} :=
\left( \begin{array}{cccc}
\la f_1,g_1 \ra & \la f_2,g_1 \ra & \cdots & \la f_L,g_1 \ra \\
\la f_1,g_2 \ra & \la f_2,g_2 \ra & \cdots & \la f_L,g_2 \ra \\
\vdots & \vdots & \ddots & \vdots \\
\la f_1,g_L \ra & \la f_2,g_L \ra & \cdots & \la f_L,g_L \ra
\end{array} \right).
\end{equation}
We will use this notation mainly when $\{ f_j \}$ and $\{ g_j \}$ are frames or subsets of frames.  It is useful to note that if $\{ f_1, \cdots, f_L \}$ and $\{ g_1, \cdots, g_L \}$ are two bases for the same Hilbert space $\cH$, then $G(\{ f_1, \cdots, f_L \},\{ g_1, \cdots, g_L \})$ is invertible.  Indeed, if $\{ e_j \}$ is an orthonormal basis for $\cH$, and $A$ and $B$ are invertible matrices with $Ae_j = f_j$ and $Be_j = g_j$, then $G(\{ f_1, \cdots, f_L \},\{ g_1, \cdots, g_L \})$ is just the matrix of $B^*A$ with respect to $\{ e_j \}$.

\section{Nilpotent Bridging}

Let $F = \{ f_j \}_{j \in \J}$ be a frame.  An {\it erasure set} for $F$ is defined to be simply a finite subset of $\J$.  We say that an erasure set $\Lambda$ for a frame $F$ satisfies the minimal redundancy condition if $\cspn\{ f_j : j \not \in \Lambda \} = \cH$.  The following elementary lemma is undoubtedly well-known.  Since it is important to this work we include a proof for completeness.  The steps of it also serve to elucidate some of the operator theoretic methods we use.

\begin{lemma}
Let $F = \{ f_j \}_{j \in \J}$ be a frame for $\cH$.  If $\Lambda \subset \J$ is an erasure set satisfying the minimal redundancy condition, then $\{ f_j : j \in \Lambda^c \}$ is a frame for $\cH$.
\end{lemma}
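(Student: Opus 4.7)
The plan is to verify the upper and lower frame bounds for $\{f_j\}_{j \in \Lambda^c}$ separately. The upper bound is immediate: for any $f \in \cH$,
\[ \sum_{j \in \Lambda^c} |\ip{f}{f_j}|^2 \leq \sum_{j \in \J} |\ip{f}{f_j}|^2 \leq B\|f\|^2, \]
where $B$ is the upper frame bound of $F$. So the whole task reduces to producing a strictly positive lower frame bound.

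The natural tool is the frame operator of the truncated family, $S'f = \sum_{j \in \Lambda^c}\ip{f}{f_j}f_j$. I would write $S' = S - K$, where $S = \sum_{j \in \J} f_j \otimes f_j$ is the frame operator of $F$ (bounded, self-adjoint, and invertible with $AI \leq S \leq BI$) and $K = \sum_{j \in \Lambda} f_j \otimes f_j$ is a self-adjoint, positive operator of finite rank, because $\Lambda$ is finite. Two properties of $S'$ are then immediate. First, $S'$ is positive, since $\ip{S'f}{f} = \sum_{j \in \Lambda^c}|\ip{f}{f_j}|^2 \geq 0$. Second, $S'$ is injective, since if $S'f = 0$ then the same sum vanishes, forcing $f \perp f_j$ for every $j \in \Lambda^c$, and the minimal redundancy hypothesis $\cspn\{f_j : j \in \Lambda^c\} = \cH$ gives $f = 0$.

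The main obstacle is that in infinite dimensions a positive injective operator need not be bounded below; its spectrum could in principle accumulate at $0$. I would close this gap with a compact perturbation argument. Since $K$ is compact, Weyl's theorem on invariance of the essential spectrum gives $\sigma_{\text{ess}}(S') = \sigma_{\text{ess}}(S) \subset [A,B]$, so $0 \notin \sigma_{\text{ess}}(S')$. For a self-adjoint operator every point of $\sigma(S') \setminus \sigma_{\text{ess}}(S')$ is an isolated eigenvalue of finite multiplicity; since $0$ is not an eigenvalue of $S'$ by injectivity, $0 \notin \sigma(S')$. Combined with positivity this forces $\sigma(S') \subset [\delta, \|S'\|]$ for some $\delta > 0$, so $\ip{S'f}{f} \geq \delta\|f\|^2$, which is the required lower frame bound.

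If one prefers to avoid spectral theory, the same conclusion can be reached directly via weak compactness: a hypothetical sequence of unit vectors $f_n$ with $\sum_{j \in \Lambda^c}|\ip{f_n}{f_j}|^2 \to 0$ has a weakly convergent subsequence whose limit $f$ satisfies $\ip{f}{f_j} = 0$ for every $j \in \Lambda^c$, hence $f = 0$ by minimal redundancy; finiteness of $\Lambda$ then forces $\sum_{j \in \Lambda}|\ip{f_n}{f_j}|^2 \to 0$ along the subsequence, contradicting the lower frame bound for $F$ applied to the unit vectors $f_n$.
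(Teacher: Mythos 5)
Your proof is correct, but it travels a different route than the paper's. You attack the general frame operator $S' = S - K$ head-on: the upper bound is trivial, injectivity of $S'$ follows from minimal redundancy, and the lower bound comes from the fact that the finite-rank (hence compact) perturbation $K$ cannot move the essential spectrum, so $0 \notin \sigma_{\text{ess}}(S')$ and, $S'$ being self-adjoint and not having $0$ as an eigenvalue, $0 \notin \sigma(S')$; your weak-compactness variant reaches the same conclusion with softer tools. The paper instead argues in two stages: it first handles the Parseval case by showing that non-invertibility of $R_\Lambda = I - E_\Lambda$ produces (via finite rank of $E_\Lambda$) an actual eigenvector $x$ with $R_\Lambda x = 0$, and then a compression-by-the-rank-one-projection $x \otimes x$ argument forces $x \perp f_j$ for all $j \in \Lambda^c$, contradicting minimal redundancy; the general case is then reduced to the Parseval case by conjugating with $S^{\pm 1/2}$. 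The common germ is the same --- finiteness of $\Lambda$ means trouble at $0$ in the spectrum must come from an eigenvector, which minimal redundancy forbids --- but your version invokes Weyl's theorem on invariance of the essential spectrum (a heavier citation) in exchange for skipping the Parseval reduction entirely, while the paper's argument is more elementary and self-contained, using only that nonzero spectral points of finite-rank operators are eigenvalues together with a positivity trick. Both are complete; yours is arguably shorter if Weyl's theorem is taken as known, and your second (weak compactness) argument is a nice fully elementary alternative that likewise avoids any Parseval normalization.
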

\begin{proof}
First consider the case where $F$ is a Parseval frame.  Assume that $\{ f_j : j \in \Lambda^c \}$ is not a frame for $\cH$.  Then the operator $R_\Lambda = \sum_{j \in \Lambda^c} f_j \otimes f_j$ for $\{ f_j : j \in \Lambda^c \}$ is not invertible.  Let $E_\Lambda = \sum_{j \in \Lambda} f_j \otimes f_j$.  Then, $R_\Lambda = I - E_\Lambda$, so $1 \in \sigma(E_\Lambda)$ (the spectrum of $E_\Lambda$).  Since $E_\Lambda$ is a finite rank operator, 1 is an eigenvalue, so there is a unit vector $x \in \cH$ so that $E_\Lambda x = x$.  Then, $R_\Lambda x = 0$.  Let $P = x \otimes x$ be the orthogonal projection onto $\C x$.  On one hand, we have
\[ P R_\Lambda P = P R_\Lambda (x \otimes x) = P ((R_\Lambda x) \otimes x ) = 0. \]
On the other, we have
\[ P R_\Lambda P = P \left( \sum_{j \in \Lambda^c} f_j \otimes f_j \right) P = \sum_{j \in \Lambda^c} P f_j \otimes P f_j. \]
But since each summand $P f_j \otimes P f_j$ is a positive operator, $Pf_j = 0$ for all $j \in \Lambda^c$.  That is, $x \perp \cspn\{ f_j : j \in \Lambda^c \}$. Therefore, $\Lambda$ does not satisfy the minimal redundancy condition with respect to $F$.  Hence the result holds for Parseval frames.

Now, consider the case where $F$ is a general frame.  Let $S = \sum_{j \in \J} f_j \otimes f_j$ be the frame operator for $F$.  Let $h_j = S^{-\frac{1}{2}} f_j$.  Then, $\{ h_j \}_{j \in \J}$ is a Parseval frame, and
\[ \cspn\{ h_j : j \in \Lambda^c \} = S^{-\frac{1}{2}} \cspn\{ f_j : j \in \Lambda^c \} = \cH. \]
So, $\Lambda$ satisfies the minimal redundancy condition with respect to $\{ h_j \}_{j \in \J}$.  Since the lemma holds for Parseval frames, $\sum_{j \in \Lambda^c} h_j \otimes h_j$ is invertible.  Thus,
\[ R_\Lambda = \sum_{j \in \Lambda^c} f_j \otimes f_j = \sum_{j \in \Lambda^c} S^{\frac{1}{2}} h_j \otimes S^{\frac{1}{2}} h_j = S^{\frac{1}{2}} \left( \sum_{j \in \Lambda^c} h_j \otimes h_j \right) S^{\frac{1}{2}} \]
is invertible.  Therefore, $\{ f_j : j \in \Lambda^c \}$ is a frame for $\cH$.
\end{proof}

Let $(F, G) = \{ f_j , g_j\}_{j \in \J}$ be a dual frame pair.  As above, an {\it erasure set} for $(F,G)$ is simply a finite subset of $\J$.  We say that an erasure set $\Lambda $ satisfies the {\it minimal redundancy condition for the dual frame pair} $(F,G)$ if $\cspn\{ g_j : j \not \in \Lambda \} = \cH$.  We point out that the minimal redundancy condition for a dual frame pair $(F,G)$ as we have defined it is a condition on only the analysis frame $G$.  The redundancy properties of the  synthesis frame $F$ play a role here only in that it is required to be a dual frame to $G$.  For the special case  where $G$ is the standard dual of $F$, $F$ and $G$ have the same linear redundancy properties.  The Parseval frame case, where $F = G$, is a special case of this.  For a dual pair $(F,G)$, if $\Lambda$ satisfies the minimal redundancy condition then since $\{g_j : j \in \Lambda^c\}$ is a frame for $\cH$ it has {\it some} frame dual (in general many duals) that will yield the reconstruction of $f$ from the
coefficients over
$\Lambda^c$, so there is enough information in $\{ \la f,g_j \ra : j \in \Lambda^c \}$ to reconstruct $f$.  On the other hand if $\Lambda$ fails the minimal redundancy condition then some nonzero vector $f$ will be orthogonal to $g_j$ for all $j \in \Lambda^c$, and hence no reconstruction of $f$ is possible using only the coefficients $\{\la f,g_j \ra : j \in \Lambda^c\}$.  This justifies the use of the word ``minimal'' in the description of the minimal redundancy condition.

Let $F$ be a {\it Parseval frame}.  If $\Lambda$ is an erasure set which satisfies the minimal redundancy condition, then $\{ f_j \}_{j \in \Lambda^c}$ still forms a frame, and the {\it partial reconstruction operator} $R_\Lambda := \sum_{j \in \Lambda^c} f_j \otimes f_j$ is the {\it frame operator} for the reduced frame $\{ f_j \}_{j \in \Lambda^c}$, hence it is invertible.  Let $f_R = R_\Lambda f$ be the partial reconstruction of the vector $f$.  It is possible to reconstruct $f$ from the ``good'' Fourier coefficients by $f = R_\Lambda^{-1} f_R$.
However, given a {\it dual frame pair} $(F,G)$ indexed by $\J = \{ 1,2, \cdots, N \}$ and an erasure set $\Lambda$ satisfying the minimal redundancy condition, the partial reconstruction operator $R_\Lambda := \sum_{j \in \Lambda^c} f_j \otimes g_j$ need not be invertible.  In fact invertibility of $R_\Lambda$ can fail even if both $F$ and $G$ separately satisfy the minimal redundancy condition for $\Lambda$.  The following simple example shows that this can happen and $R_\Lambda$ can even be the zero operator.

\begin{example}\label{noninv}
Let $\{ f_j,g_j \}_{j = 1}^N$ be a dual frame pair.  Suppose
\begin{eqnarray*}
f_j &=& f_{j+N} = f_{j+2N} \ \  1 \leq j \leq N \\
g_{j + N} &=& -g_j \ \  1 \leq j \leq N \\
g_{j+2N} &=& g_j \ \  1 \leq j \leq N.
\end{eqnarray*}
Then, it is easily verified that $\{ f_j,g_j \}_{j=1}^{3N}$ is a dual frame pair, and $\Lambda = \{ 1, 2, \cdots, N \}$ satisfies the minimal redundancy condition with respect to both frames.  However,
\[ R_\Lambda = \sum_{j = N+1}^{3N} f_j \otimes g_j = \sum_{j = N+1}^{2N} f_j \otimes g_j + \sum_{j = 2N + 1}^{3N} f_j \otimes g_j = \sum_{j = 1}^N f_j \otimes (-g_j) + \sum_{j = 1}^N f_j \otimes g_j = 0. \]
\qed
\end{example}

Even when $R_\Lambda$ is invertible, computing $R_\Lambda^{-1}$ can be a computationally costly process.  The error for the partial reconstruction is $f_E = f - f_R$, and the associated error operator for the partial reconstruction is $E_{\Lambda} = I - R_{\Lambda} = \sum_{j \in \Lambda}f_j \otimes g_j$.  Then $R_{\Lambda}^{-1} = (I - E_{\Lambda})^{-1}$, and if the norm, or more generally the spectral radius of $E_{\Lambda}$ is strictly less than $1$ then $R_{\Lambda}^{-1}$ can be computed using the Neumann series expansion $R_{\Lambda}^{-1} = I + E + E^2 + \cdots = \sum_{j = 0}^{\infty} E^j$.

For certain very special cases $(F,G)$, with corresponding erasure set $\Lambda$, the error operator, $E_\Lambda$ will be nilpotent of index 2, (i.e. $E_\Lambda^2 = 0$) such as the example below.  In this case, $R_\Lambda^{-1} = I + E_\Lambda$, and moreover, the error $f_E$ of $f$, can be obtained by applying the error operator to the partial reconstruction $f_R$ instead of $f$.  (That is, $f_E = E_\Lambda f = E_\Lambda (f_E + f_R) = E_\Lambda^2 f + E_\Lambda f_R = E_\Lambda f_R$.)

\begin{example}\label{nilpex}
Let $\{ e_1,e_2 \}$ be the standard orthonormal basis for $\C^2$.  Let \\ $F = \{ e_1,-e_1,e_1,e_2 \}$ and $G = \{ e_2,e_2,e_1,e_2 \}$.  Let $\Lambda = \{ 1 \}$.  Then $E_\Lambda = e_1 \otimes e_2$.  So
\[ E_\Lambda^2 = (e_1 \otimes e_2)(e_1 \otimes e_2) = \la e_1,e_2 \ra (e_1 \otimes e_2) = 0. \]
Therefore $R_\Lambda^{-1} = I + E_\Lambda$. \qed
\end{example}

If $\Omega$ is a subset of $\Lambda^c$ we can sometimes pre-condition $f_R$ to be a better first approximation to $f$ before applying an inversion operator which is a correspondingly pre-conditioned version of $R_{\Lambda}^{-1}$.
We will call such a pre-conditioning {\it bridging} if the method is to linearly replace each ``erased'' coefficient $\{c_j : j \in \Lambda \}$ with a weighted average of the coefficients in $\{c_j : j \in \Lambda \}^c$.  By ``linearly'' we mean that the same weights are used in the weighted average for each $f \in H$.  However, each index $j \in \Lambda$ can correspond to a different weighted average from the coefficients indexed by $\Lambda^c$.  The set of all the indices in $\Lambda^c$ that are used to bridge the coefficients with indices in $\Lambda$ is called the {\it bridge} for ${\Lambda}$.  We denote this by $\Omega$.

Let $\Lambda$ be an erasure set for a dual frame pair $(F,G)$ that satisfies the minimal redundancy condition with respect to $G$.  Since $\cspn\{ g_j : j \in \Lambda^c \} = \cH$, we could  pick indices $\Omega \subset \Lambda^c$ so that $\{ g_\omega: \omega \in \Omega \}$ is a basis for $\cH$.  For each $k \in \Lambda$, write
\[ g_k = \sum_{\omega \in \Omega} c_\omega^{(k)} g_\omega. \]
Then,
\[ \la f,g_k \ra = \la f,\sum_{\omega \in \Omega} c_\omega^{(k)} g_\omega \ra = \sum_{\omega \in \Omega} \overline{c_\omega^{(k)}} \la f,g_\omega \ra. \]
So, replacing the Fourier coefficient $\la f,g_k \ra$ with the weighted average of the $\la f,g_\omega \ra$ as above, we obtain a perfect reconstruction. But this method requires the cardinality $|\Omega|$ to be $\dim(\cH)$, which is far too large to be useful in a nontrivial reduction of the reconstruction process.  In this article we will consider only bridge sets $\Omega$ of the same (or lesser) cardinality as that of the erasure set $\Lambda$.

Let $(F,G)$ be a dual frame pair, $\Lambda$ be an erasure set, and $\Omega$ be our bridge set.  We will replace each Fourier coefficient $\la f,g_k \ra$ for $k \in \Lambda$ with a $\la f,g_k' \ra$ for some choice of $g_k' \in \spn\{ g_j : j \in \Omega \}$.  Then, our partial reconstruction with bridging is
\[ \tilde{f} = f_R + f_B \]
where $f_B = \sum_{j \in \Lambda} \la f,g_j ' \ra f_j$.  We call $f_B$ the bridging supplement and $B_\Lambda : = \sum_{j \in \Lambda} f_j \otimes g_j '$ the {\it bridging supplement operator}.  The reduced error is $f_{\tilde{E}} := f - \tilde{f}$, and the associated {\it reduced error operator} is $\tilde{E}_\Lambda = I - R_\Lambda - B_\Lambda$.  We have
\[ \tilde{E}_\Lambda f = f_{\tilde{E}} = \sum_{j \in \Lambda} \la f,g_j - g_j' \ra f_j. \]

There are various ways to choose the $g_k' \in \spn\{ g_j : j \in \Omega \}$, but in this paper, we choose $g_k'$ so that the reduced error operator is nilpotent of index 2.  Then the logic in the sentence just above Example \ref{nilpex} will apply, leading to perfect reconstruction in a final step.  It is straightforward to verify that the condition
\begin{equation}\label{perpcond}
f_j \perp (g_k - g_k ') \ \ \forall j,k \in \Lambda
\end{equation}
forces the reduced error operator to be nilpotent of index 2.  So, writing
\begin{equation}\label{coeffs}
g_k ' = \sum_{\ell \in \Omega} c_\ell^{(k)} g_ \ell
\end{equation}
we seek coefficients $c_\ell^{(k)}$ so that (\ref{perpcond}) is satisfied.  We have
\[ 0 = \la f_j, g_k - \sum_{\ell \in \Omega} c_\ell^{(k)} g_\ell \ra = \la f_j,g_k \ra - \sum_{\ell \in \Omega} \overline{c_\ell^{(k)}} \la f_j,g_\ell \ra. \]
For each $k \in \Lambda$, we obtain a system of $|\Lambda|$ equations with $|\Omega|$ unknowns:
\[ \la f_j,g_k \ra = \sum_{\ell \in \Omega} \overline{c_\ell^{(k)}} \la f_j,g_\ell \ra. \]
If we enumerate $\Lambda = \{ \lambda_j \}_{j=1}^L$ and $\Omega = \{ \omega_j \}_{j=1}^M$ we obtain the matrix equation

\begin{equation}\label{ME1}
 \left(
     \begin{array}{cccc}
       \la f_{\lambda_1},g_{\omega_1} \ra & \la f_{\lambda_1},g_{\omega_2} \ra & \cdots & \la f_{\lambda_1},g_{\omega_M} \ra \\
       \la f_{\lambda_2},g_{\omega_1} \ra & \la f_{\lambda_2},g_{\omega_2} \ra & \cdots & \la f_{\lambda_2},g_{\omega_M} \ra \\
       \vdots                     & \vdots                     & \ddots & \vdots \\
       \la f_{\lambda_L},g_{\omega_1} \ra & \la f_{\lambda_L},g_{\omega_2} \ra & \cdots & \la f_{\lambda_L},g_{\omega_M} \ra \\
     \end{array}
   \right)
\left(
  \begin{array}{c}
    \overline{c_{\omega_1}^{(k)}} \\
     \overline{c_{\omega_2}^{(k)}} \\
    \vdots \\
     \overline{c_{\omega_M}^{(k)}} \\
  \end{array}
\right)
=
\left(
  \begin{array}{c}
    \la f_{\lambda_1},g_k \ra \\
    \la f_{\lambda_2},g_k \ra \\
    \vdots \\
    \la f_{\lambda_L},g_k \ra \\
  \end{array}
\right)
\end{equation}
for all $k \in \Lambda$.  We call the matrix in (\ref{ME1}) the {\it bridge matrix} and denote it $B(F,G,\Lambda,\Omega)$.  Since the bridge matrix is independent of $k$, we can solve for all of the coefficients simultaneously with the equation

\begin{equation}\label{ME2}
\left( \la f_{\lambda_j},g_{\omega_k} \ra \right)_{j,k} \left( \overline{c_{\omega_j}^{\lambda_k}} \right)_{j,k} = \left( \la f_{\lambda_j},g_{\lambda_k} \ra \right)_{j,k}
\end{equation}
We can rewrite this equation as
\begin{equation}\label{BE}
B(F,G,\Lambda,\Omega) C = B(F,G,\Lambda,\Lambda)
\end{equation}
where $C$ denotes our coefficient matrix (actually, $C$ is the matrix of complex conjugates of the coefficients $c_{\omega_j}^{(\lambda_k)}$ in \ref{ME2}).
\begin{remark}
(1) The transpose of the bridge matrix $B(F,G,\Lambda,\Omega)$ is a skew (i.e. diagonal-disjoint) minor of the cross Gram matrix $G(F,G)$ of the frames $F$ and $G$, and the transpose of $B(F,G,\Lambda,\Lambda)$ is a principle minor of $G(F,G)$.
(2) The form of the bridge matrix in \ref{ME1} depends on the particular enumerations one takes of $\Lambda$ and $\Omega$.  However, for two different enumerations one bridge matrix will transform into the other by interchanging appropriate rows and columns, and so the norm and the rank of the matrices will be the same.  In particular, one will be invertible if and only if the other is.
\end{remark}

Given a dual frame pair $(F,G)$, and an erasure set $\Lambda$, a bridge set $\Omega$ is said to satisfy the {\it robust bridging condition} (or $\Omega$ is a {\it robust} bridge set) if equation (\ref{BE}) has a solution.

Now, given $f \in \cH$,
\[ f = f_{\tilde{E}} + \tilde{f}. \]
However, $\tilde{E}_\Lambda (f - \tilde{f}) = \tilde{E}_\Lambda ^ 2 f = 0$.  Thus, $f_{\tilde{E}} = \tilde{E}_\Lambda \tilde{f}$, and we can reconstruct $f$ from the good Fourier coefficients by
\begin{equation}
f = \tilde{f} + \tilde{E}_\Lambda \tilde{f}.
\end{equation}
Furthermore, $f_B \in \spn \{ f_j : j \in \Lambda \}$, so by $(\ref{perpcond})$, $\tilde{E}_\Lambda f_B = 0$.  Therefore, to reconstruct $f$, we have
\begin{equation}\label{reconstruction}
f = \tilde{f} + \tilde{E}_\Lambda f_R.
\end{equation}

Let $\alpha_j = \la f,g_j \ra$ and $\beta_j = \la f_R,g_j \ra$.  Then, $\alpha_j$ for $j \in \Omega$ are known coefficients, and the $\beta_j$ are computable.  The theorem below gives a direct algorithm for the reconstruction that involves nilpotent bridging and then applying the error operator.

\begin{theorem}\label{algorithm}
Let $(F,G)$ be a dual frame pair with erasure set $\Lambda$ satisfying the minimal redundancy condition, and $\Omega$ be a robust bridge set.  Assume $C = \left(\overline{c_{j}^{(k)}} \right)_{j \in \Omega, \, k \in \Lambda}$ solves the matrix equation $B(F,G,\Lambda,\Omega)C = B(F,G,\Lambda,\Lambda)$.  Then, $$(\la f,g_j \ra)_{j \in \Lambda}  = C^T ((\alpha_j)_{j \in \Omega} - (\beta_j)_{j \in \Omega}) + (\beta_j)_{j \in \Lambda},$$ where $C^T$ denotes the transpose of C.
\end{theorem}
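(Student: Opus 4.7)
The plan is to reduce the stated identity to the orthogonality condition (\ref{perpcond}) that already defines the bridge coefficients. Fix $k \in \Lambda$; reading off the $k$-th component of the claimed vector equation, what must be shown is
\[ \la f,g_k \ra - \beta_k = \sum_{\ell \in \Omega} \overline{c_\ell^{(k)}}\, (\alpha_\ell - \beta_\ell). \]
Setting $f_E := f - f_R = E_\Lambda f$, both sides become inner products against $f_E$: the left-hand side is $\la f_E,g_k \ra$, and using (\ref{coeffs}) the right-hand side is $\la f_E, \sum_{\ell \in \Omega} c_\ell^{(k)} g_\ell \ra = \la f_E, g_k' \ra$. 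So the whole theorem collapses to showing
\[ \la f_E,\, g_k - g_k' \ra = 0 \quad \text{for every } k \in \Lambda. \]

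The key point is that $f_E = \sum_{j \in \Lambda} \la f,g_j \ra f_j$, so $f_E \in \spn\{ f_j : j \in \Lambda \}$. By hypothesis $C$ solves the bridge equation (\ref{BE}); expanding this componentwise recovers exactly the system $\la f_j,g_k \ra = \sum_{\ell \in \Omega} \overline{c_\ell^{(k)}} \la f_j,g_\ell \ra$ for all $j,k \in \Lambda$, which is precisely (\ref{perpcond}), i.e.\ $f_j \perp (g_k - g_k')$ for all $j,k \in \Lambda$. Expanding $f_E$ in the $f_j$'s then gives
\[ \la f_E, g_k - g_k' \ra = \sum_{j \in \Lambda} \la f,g_j \ra \la f_j, g_k - g_k' \ra = 0, \]
which is what is needed.

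In summary, I would (i) take the $k$-th coordinate of the vector identity, (ii) subtract $\beta_k$ from both sides and recognize the quantities $\la f,g_k\ra - \beta_k$ and $\alpha_\ell - \beta_\ell$ as $\la f_E,g_k\ra$ and $\la f_E,g_\ell\ra$ respectively, (iii) recognize the right-hand sum as $\la f_E, g_k' \ra$ via the definition of $g_k'$, and (iv) conclude using the fact that the reduced-error construction was set up precisely to force $g_k - g_k' \perp \spn\{ f_j : j \in \Lambda \}$, the subspace containing $f_E$. There is no real obstacle here: the content of the theorem is entirely packaged into the orthogonality relation (\ref{perpcond}), and what remains is the bookkeeping above identifying the stated matrix formula as the coordinate form of $f_E = E_\Lambda f$ seen through the dual frame.
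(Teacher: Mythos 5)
Your proof is correct and is essentially the paper's own argument: both reduce the claimed coordinate identity to the orthogonality $f - f_R \perp g_k - g_k'$ for $k \in \Lambda$, which follows from the bridge equation (\ref{BE}) (equivalently (\ref{perpcond})) together with the fact that $f - f_R = \sum_{j \in \Lambda} \la f, g_j \ra f_j \in \spn\{ f_j : j \in \Lambda \}$. The only cosmetic difference is direction: the paper expands $\la f, g_j \ra$ forward into the stated formula, while you work backward from the formula to the orthogonality relation.
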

\begin{proof}
Let $\{ f_j,g_j \}_{j \in \J}$ be a dual frame pair, $\Lambda$ be an erasure set, and $\Omega$ be a corresponding robust bridge set.  For $j \in \Lambda$ and $f \in \cH$
\begin{eqnarray*}
\la f,g_j \ra &=& \la f,g_j ' \ra + \la f,g_j - g_j ' \ra \\
&=& \la f,g_j ' \ra + \la f - f_R,g_j - g_j ' \ra + \la f_R,g_j - g_j ' \ra.
\end{eqnarray*}
Since $f - f_R \in \spn\{ f_j : j \in \Lambda \}$, equation (\ref{perpcond}) says that $f - f_R \perp g_j - g_j '$.  So,
\begin{eqnarray*}
\la f,g_j \ra &=& \la f,g_j ' \ra + \la f_R,g_j - g_j ' \ra \\
&=& \la f - f_R,g_j ' \ra + \la f_R,g_j \ra \\
&=& \sum_{k \in \Omega} \overline{c_k^{(j)}} \la f - f_R,g_k \ra + \la f_R, g_j \ra.
\end{eqnarray*}
Therefore, we can recover the erased coefficients with the following equation:
\[ (\la f,g_j \ra)_{j \in \Lambda} = C^T (\la f - f_R,g_k \ra)_{k \in \Omega} + (\la f_R,g_j \ra)_{j \in \Lambda}. \]
That is,
\[ (\la f,g_j \ra)_{j \in \Lambda}  = C^T ((\alpha_j)_{j \in \Omega} - (\beta_j)_{j \in \Omega}) + (\beta_j)_{j \in \Lambda}. \]
\end{proof}

\begin{example}\label{one erasure}
Consider the case where $\Lambda = \{ k \}$, and choose a set $\Omega = \{ \ell \}$.  Then, $g_k ' = c \, g_\ell$.  For Nilpotent bridging, we require that $\la f_k,g_k - g_k ' \ra = 0$.  In solving for c, we get
\[ 0 = \la f_k,g_k - g_k ' \ra = \la f_k,g_k \ra - \overline{c} \la f_k,g_\ell \ra. \]
So, if $\la f_k,g_\ell \ra \not = 0$, then $\Omega$ is a robust bridge set for $\Lambda$ and
\[ g_k ' = \frac{\la g_k,f_k \ra}{\la g_\ell,f_k \ra} g_\ell. \]
In particular any singleton set $\{ \ell \}$ is a robust bridge set for $\Lambda$ provided $\la f_k,g_\ell \ra \not = 0$.  So, in a suitably random frame, any singleton set disjoint from $\Lambda$ will be a robust bridge set. \qed
\end{example}

The following result provides a necessary and sufficient condition for the existence of a robust bridge set for a given erasure set.

\begin{theorem}\label{2-Nilpotent Bridging Theorem}
Let $(F,G)$ be a dual frame pair, and let $\Lambda$ be an erasure set.  Then there is a robust bridge set $\Omega$ for $\Lambda$ if and only if $\Lambda$ satisfies the minimal redundancy condition for $G$.  In this case we can take $|\Omega| = \dim(\mathcal{F})$, where $\mathcal{F} = \spn \{ f_j : j \in \Lambda \})$.
\end{theorem}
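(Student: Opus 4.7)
The key reformulation is that condition (\ref{perpcond}) says exactly $g_k - g_k' \in \mathcal{F}^\perp$ for every $k \in \Lambda$, where $\mathcal{F} = \spn\{f_j : j \in \Lambda\}$. So $\Omega$ is a robust bridge set if and only if, for every $k \in \Lambda$, there exists some $g_k' \in \spn\{g_\omega : \omega \in \Omega\}$ with $P_\mathcal{F} g_k = P_\mathcal{F} g_k'$; equivalently, $P_\mathcal{F}\bigl(\spn\{g_\omega : \omega \in \Omega\}\bigr)$ contains $P_\mathcal{F} g_k$ for every $k \in \Lambda$. I would carry out the whole proof working with the finite-dimensional projection $P := P_\mathcal{F}$ onto $\mathcal{F}$.

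For the ``only if'' direction, suppose $\Omega$ is robust, so that bridging vectors $g_k'$ exist making $\tilde E_\Lambda$ nilpotent of index $2$. Then the reconstruction formula (\ref{reconstruction}) (or equivalently Theorem \ref{algorithm}) expresses every $f \in \cH$ purely in terms of the non-erased coefficients $\{\la f, g_j\ra : j \in \Lambda^c\}$, since $\Omega \subset \Lambda^c$ and $f_R$ uses only indices in $\Lambda^c$. If $\la f, g_j\ra = 0$ for all $j \in \Lambda^c$, this forces $f_R = 0$, $\tilde f = 0$, hence $f = 0$, giving $\cspn\{g_j : j \in \Lambda^c\} = \cH$.

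For the ``if'' direction, the central claim is that the minimal redundancy condition forces $\{Pg_j : j \in \Lambda^c\}$ to span $\mathcal{F}$. Indeed, if $h \in \mathcal{F}$ satisfies $\la h, Pg_j\ra = 0$ for every $j \in \Lambda^c$, then $\la h, g_j\ra = \la Ph, g_j\ra = \la h, Pg_j\ra = 0$ for all $j \in \Lambda^c$, so minimal redundancy forces $h = 0$. Since $\mathcal{F}$ is finite dimensional, I can then extract indices $\Omega = \{\omega_1,\dots,\omega_M\} \subset \Lambda^c$ with $M = \dim \mathcal{F}$ such that $\{Pg_{\omega_1},\dots,Pg_{\omega_M}\}$ is a basis for $\mathcal{F}$. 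For each $k \in \Lambda$, expand $Pg_k = \sum_{\omega \in \Omega} c_\omega^{(k)} Pg_\omega$ and set $g_k' = \sum_{\omega \in \Omega} c_\omega^{(k)} g_\omega$. Then $g_k - g_k' \in \mathcal{F}^\perp$, which is condition (\ref{perpcond}); unwinding this gives a solution $C = \bigl(\overline{c_{\omega_j}^{(\lambda_k)}}\bigr)$ of the bridge equation (\ref{BE}), so $\Omega$ is a robust bridge set of size $\dim \mathcal{F}$.

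The only non-routine step is the spanning claim for $\{Pg_j : j \in \Lambda^c\}$, but it is immediate once one uses that $h \in \mathcal{F}$ satisfies $Ph = h$. A pleasant byproduct of compressing everything to $\mathcal{F}$ via $P$ is that the bound $|\Omega| = \dim \mathcal{F}$ comes out automatically and is optimal, and since $\dim \mathcal{F} \le |\Lambda|$ the resulting bridge set is automatically no larger than the erasure set.
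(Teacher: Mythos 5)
Your proof is correct and follows essentially the same route as the paper: your compression by $P_{\mathcal F}$ (showing $\{P_{\mathcal F}g_j : j\in\Lambda^c\}$ spans $\mathcal F$, choosing a basis among these projections, and lifting the expansion of $P_{\mathcal F}g_k$ to define $g_k'$) is just a repackaging of the paper's completion of a basis of $\mathcal F^{\perp}$ by vectors $g_{j_k}$, $j_k\in\Lambda^c$, and your converse via the reconstruction formula (\ref{reconstruction}) unwinds to the paper's direct computation that a vector orthogonal to all $g_j$, $j\in\Lambda^c$, lies in $\mathcal F$ and must vanish. The only flaw is the closing aside that $|\Omega|=\dim(\mathcal F)$ is \emph{optimal}: the theorem does not claim this and it is false in general, since smaller (even empty) bridge sets can be robust when the unreduced error operator is already nilpotent of index $2$, as in Example \ref{nilpex}.
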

\begin{proof}
Assume that $\Lambda$ satisfies the minimal redundancy condition.  Let $\mathcal{F} = \spn \{ f_j : j \in \Lambda \}$. Let $q = \dim(\mathcal{F})$.  Let $\{ h_j \}_{j\in \N}$ be a basis for $\mathcal{F}^\perp$.  Since $\mathcal{F}^\perp$ has codimension $q$, we can complete this set to a basis $\{ h_j \}_{j \in \N} \cup \{ g_{j_k} \}_{k=1}^q$, where each $j_k \in \Lambda^c$.  Let $\Omega = \{ j_k \}_{k=1}^q$. Then $|\Omega| = q$ and $\Lambda \cap \Omega = \emptyset$.  For each $\ell \in \Lambda$, write
\[ g_\ell = \sum_{k=1}^q c_{j_k}^{(\ell)} g_{j_k} + \sum_{j \in \J} b_j^{(\ell)} h_j. \]
Let
\[ g_\ell ' = \sum_{k=1}^q c_{j_k}^{(\ell)} g_{j_k}. \]
Then $g_\ell - g_\ell ' \in \mathcal{F}^\perp$.  Therefore, by (\ref{perpcond}), the $c_{j_k}^{(\ell)}$ solve the bridge equation (\ref{BE}) and $\Omega$ is a robust bridge set.

To prove the converse, assume that $\Omega$ is a robust bridge set.   Assume that $f \perp \cspn\{g_j : j \in \Lambda^c\}$.  Then,
\[ f = \sum_{j \in \J} \la f,g_j \ra f_j = \sum_{j \in \Lambda} \la f,g_j \ra f_j. \]
So, $f \in \spn\{ f_j : j \in \Lambda \}$.  We have
\[ f = \sum_{j \in \Lambda} \la f,g_j - g_j' \ra f_j + \sum_{j \in \Lambda} \la f,g_j' \ra f_j. \]
However, since $f \in \spn \{ f_j : j \in \Lambda \}$, equation (\ref{perpcond}) says that $\la f,g_j - g_j' \ra = 0$ for all $j \in \Lambda$.  Since $g_j' \in \spn\{ g_j : j \in \Lambda^c \}$, $\la f,g_j' \ra = 0$ for all $j \in \Lambda$.  Hence, $f = 0$.  Therefore, $\cH = \cspn\{g_j : j \in \Lambda^c\}$ and $\Lambda$ satisfies the minimal redundancy condition with respect to $G$.
\end{proof}

The following is a useful criterion for sufficiency of robustness of a bridge set.

\begin{theorem}\label{cond1}
Let $(F,G)$ be a dual frame pair, and $\Lambda$ be an erasure set.  If $\Omega \subset \Lambda^c$ is a bridge set for which
\begin{equation}\label{rankcond}
\rank(B(F,G,\Lambda,\Omega)) = \dim(\cF)
\end{equation}
where $\cF = \spn\{ f_j : j \in \Lambda \}$, then $\Omega$ is a robust bridge set.  In particular if $|\Lambda|=|\Omega|$ and $B(F,G,\Lambda,\Omega)$ is invertible, then $\Omega$ is a robust bridge set.
\end{theorem}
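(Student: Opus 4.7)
The plan is to reduce robustness of $\Omega$ to a statement about column spaces. By definition, $\Omega$ is robust precisely when the matrix equation $B(F,G,\Lambda,\Omega)C = B(F,G,\Lambda,\Lambda)$ has a solution $C$, and a standard fact from linear algebra says this is solvable if and only if every column of the right-hand side lies in the column space of $B(F,G,\Lambda,\Omega)$. So the task reduces to showing both matrices share the same column space, which I will do by realizing both sets of columns as the image of a single linear map whose range has dimension $\dim(\cF)$.

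To this end, enumerate $\Lambda = \{\lambda_1,\ldots,\lambda_L\}$ and define $T : \cH \to \C^L$ by
\[
T(g) = \bigl(\la f_{\lambda_1},g\ra,\ \la f_{\lambda_2},g\ra,\ \ldots,\ \la f_{\lambda_L},g\ra\bigr)^T.
\]
Then $\ker(T) = \cF^\perp$, so $T$ factors through $\cH/\cF^\perp \cong \cF$ as an injection, and consequently $\dim(\text{range}(T)) = \dim(\cF)$. The $k$-th column of $B(F,G,\Lambda,\Omega)$ is exactly $T(g_{\omega_k})$, and the $k$-th column of $B(F,G,\Lambda,\Lambda)$ is $T(g_{\lambda_k})$; in particular the columns of both matrices lie in $\text{range}(T)$.

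Now I apply the rank hypothesis. The column space of $B(F,G,\Lambda,\Omega)$ is a subspace of $\text{range}(T)$ of dimension $\rank(B(F,G,\Lambda,\Omega)) = \dim(\cF) = \dim(\text{range}(T))$, so these two subspaces coincide. Therefore every column $T(g_{\lambda_k})$ of $B(F,G,\Lambda,\Lambda)$ lies in the column space of $B(F,G,\Lambda,\Omega)$, and the bridge equation (\ref{BE}) admits a solution. Hence $\Omega$ is a robust bridge set.

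Finally, for the ``in particular'' clause, assume $|\Lambda|=|\Omega|=L$ and that $B(F,G,\Lambda,\Omega)$ is invertible. Then its rank is $L$. Since the columns lie in $\text{range}(T)$, we have $L \leq \dim(\cF)$; on the other hand $\dim(\cF) \leq |\Lambda| = L$ because $\cF$ is spanned by the $L$ vectors $\{f_{\lambda_j}\}$. Thus $\rank(B(F,G,\Lambda,\Omega)) = L = \dim(\cF)$ and the previous paragraph applies. The only step that requires any care is the identification $\ker(T) = \cF^\perp$ together with the codimension count, which in the infinite-dimensional setting needs to be justified via the finite-dimensionality of $\cF$; this is however immediate since $\Lambda$ is finite.
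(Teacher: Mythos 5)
Your proof is correct, and it follows a genuinely different route from the paper's. The paper first handles the square case ($|\Lambda|=|\Omega|$ with $\{f_j : j\in\Lambda\}$ linearly independent), where the rank condition means $B(F,G,\Lambda,\Omega)$ is invertible and $C=B(F,G,\Lambda,\Omega)^{-1}B(F,G,\Lambda,\Lambda)$, and then reduces the general case to it: it chooses $\Lambda_0\subset\Lambda$ with $\{f_j : j\in\Lambda_0\}$ a basis of $\cF$ and $\Omega_0\subset\Omega$ with $B(F,G,\Lambda_0,\Omega_0)$ invertible, solves the reduced system, and lifts the solution back to all of $\Lambda$ by observing that the omitted rows are the corresponding linear combinations of the retained ones. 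You instead use that robustness is, by definition, solvability of $B(F,G,\Lambda,\Omega)C=B(F,G,\Lambda,\Lambda)$, hence a column-space inclusion, and you realize all columns of both matrices as values of the evaluation map $T(g)=(\la f_{\lambda_j},g\ra)_j$, whose kernel is $\cF^\perp$ and whose range has dimension $\dim(\cF)$; the rank hypothesis then forces the column space of the bridge matrix to equal $\operatorname{range}(T)$, so the right-hand columns $T(g_{\lambda_k})$ are automatically reachable. Your argument is shorter, avoids the submatrix bookkeeping and the somewhat delicate ``same linear combinations'' step, and makes transparent why $\dim(\cF)$ is precisely the maximal attainable rank (your dimension count also gives the ``in particular'' clause cleanly, without separately assuming linear independence of $\{f_j : j\in\Lambda\}$); the paper's reduction, in exchange, yields an explicit computational recipe, namely inverting a $\kappa\times\kappa$ submatrix to produce $C$. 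One small point to tidy: since $\la f,g\ra$ is conjugate-linear in its second argument, your $T$ is antilinear rather than linear; this is harmless because $\ker T=\cF^\perp$ still holds, $\operatorname{range}(T)$ is still a complex subspace, and the induced injection of $\cF\cong\cH/\cF^\perp$ still gives $\dim\operatorname{range}(T)=\dim(\cF)$, but you should say so explicitly (or pass to the conjugated, genuinely linear map) so the quotient argument reads correctly.
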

\begin{proof}
First consider the special case where $\{ f_j : j \in \Lambda \}$ is a linearly independent set and $|\Lambda| = |\Omega|$.  The rank condition (\ref{rankcond}) is then just the condition that $B(F,G,\Lambda,\Omega)$ is invertible.  Then the system (\ref{BE}) has a unique solution $C = B(F,G,\Lambda,\Omega)^{-1}B(F,G,\Lambda,\Lambda)$.  So, $\Omega$ is robust.

Now consider the general case.  Let $\kappa = \text{rank}(B(F,G,\Lambda,\Omega)) = \dim{\cF}$.  Let $\Lambda_0 \subset \Lambda$ be such that $\{ f_j : j \in \Lambda_0 \}$ is a basis for $\cF$.  The rows of $B(F,G,\Lambda,\Omega)$ are linear combinations of the rows of $B(F,G,\Lambda_0,\Omega)$.  Thus, $\text{rank}(B(F,G,\Lambda_0,\Omega)) = \kappa$.  So, $|\Lambda_0| = \kappa$.  Then, there is a subset $\Omega_0 \subset \Omega$ with $|\Omega_0| = \kappa$ so that $\text{rank}(B(F,G,\Lambda_0,\Omega_0)) = \kappa$.  By the first paragraph of this proof, $\Omega_0$ is a robust bridge set for $\Lambda_0$.  The rows of $B(F,G,\Lambda,\Omega_0)$ are linear combinations of the rows of $B(F,G,\Lambda_0,\Omega_0)$ and the rows of $B(F,G,\Lambda,\Omega_0)$ are the {\it same} linear combinations of the rows of $B(F,G,\Lambda_0,\Omega_0)$.  It follows that $\Omega_0$ is a robust bridge set for $\Lambda$.  So since $\Omega$ contains $\Omega_0$, $\Omega$ is a robust bridge set for $\Lambda$.
\end{proof}

\begin{remark}
Theorem \ref{cond1} says that the rank condition (\ref{rankcond}) on the bridge matrix is sufficient for robustness of $\Omega$.  In the general case it is not necessary, as shown by Example \ref{nilpex}.  In that case, the unreduced error operator is already nilpotent of index 2, so any bridge set is robust for it.  From experiments, it appears that the minimal rank possible of the bridge matrix for a robust bridge set and the minimal size of $\Omega$ is linked to the number of nonzero eigenvalues of the unreduced error operator.  (See Theorem \ref{numev} for a result relating to this.)  However, for Parseval frames, the converse of Theorem \ref{cond1} holds.
\end{remark}

\begin{corollary}
Let $F$ be a Parseval frame.  If $\Lambda$ is an erasure set for $F$, and $\Omega \subset \Lambda^c$, then $\Omega$ is a robust bridge set for $\Lambda$ if and only if $\text{rank}(B(F,G,\Lambda,\Omega)) = \dim(\cF)$, where $\cF = \spn\{ f_j : j \in \Lambda \}$.  In particular, if $\{ f_j : j \in \Lambda \}$ is linearly independent and $|\Omega| = |\Lambda|$, then $\Omega$ is a robust bridge set for $\Lambda$ if and only if $B(F,G,\Lambda,\Omega)$ is invertible.
\end{corollary}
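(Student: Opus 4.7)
The forward direction is exactly Theorem \ref{cond1} (with $G=F$, since $F$ is Parseval and hence self-dual), so the only content is the converse: assuming $\Omega$ is robust, we must establish the rank equality.

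My plan is to prove the converse by sandwiching the rank of $B(F,F,\Lambda,\Omega)$ between two copies of $\dim(\cF)$. First, I would show the universal upper bound
\[ \rank(B(F,G,\Lambda,\Omega)) \leq \dim(\cF) \]
valid for \emph{any} frame pair. This comes from the simple observation that a linear dependence $\sum_j a_j f_{\lambda_j}=0$ among the rows' ``first slot'' vectors forces the same linear combination of rows of $B(F,G,\Lambda,\Omega)$ to vanish, by linearity of $\la \cdot,g_{\omega_k}\ra$ in the first variable. Hence the row rank cannot exceed the number of independent $f_{\lambda_j}$'s, which is $\dim(\cF)$.

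Next, for the complementary bound, I would use that robustness of $\Omega$ is \emph{defined} by the solvability of $B(F,G,\Lambda,\Omega)C = B(F,G,\Lambda,\Lambda)$; this places the column space of $B(F,G,\Lambda,\Lambda)$ inside the column space of $B(F,G,\Lambda,\Omega)$, so
\[ \rank(B(F,G,\Lambda,\Omega)) \geq \rank(B(F,G,\Lambda,\Lambda)). \]
Here is the step that is special to the Parseval case. With $G=F$, the matrix $B(F,F,\Lambda,\Lambda) = (\la f_{\lambda_j},f_{\lambda_k}\ra)_{j,k}$ is (up to transpose) the Gram matrix of $\{f_{\lambda_j}\}_{j}$, and a standard computation shows its rank equals $\dim(\cF)$: a vanishing row combination $\sum_j c_j \la f_{\lambda_j},f_{\lambda_k}\ra = 0$ for every $k$ means $\sum_j c_j f_{\lambda_j}$ is orthogonal to every $f_{\lambda_k}$ and hence to all of $\cF$, but it also lies in $\cF$, so it is zero, meaning every row dependence comes from a genuine linear dependence of the $f_{\lambda_j}$. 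Combining the two bounds gives $\rank(B(F,F,\Lambda,\Omega)) = \dim(\cF)$, completing the converse. This Gram-matrix step is the only real substance of the proof; it is also precisely where the argument fails outside the Parseval setting, because for a general dual pair $B(F,G,\Lambda,\Lambda)$ need not have rank $\dim(\cF)$ (witness Example \ref{nilpex}, whose error operator is already nilpotent but whose bridge matrix has lower rank).

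Finally, the ``in particular'' clause follows immediately: if $\{f_j : j \in \Lambda\}$ is linearly independent then $\dim(\cF)=|\Lambda|$, and with $|\Omega|=|\Lambda|$ the bridge matrix $B(F,F,\Lambda,\Omega)$ is square of size $|\Lambda|$, so having rank $\dim(\cF)=|\Lambda|$ is the same as being invertible.
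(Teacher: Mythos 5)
Your proof is correct, but it reaches the converse by a genuinely different route than the paper. The paper's argument is geometric: it takes the orthogonal projection $P$ onto $\cF$, uses robustness to write each $f_j$ ($j\in\Lambda$) as $Pf_j'$ with $f_j'\in\spn\{f_k:k\in\Omega\}$, extracts sub-bases $\Lambda_0\subset\Lambda$ and $\Omega_0\subset\Omega$ of $\cF$, and observes that the corresponding minor $B(F,F,\Lambda_0,\Omega_0)$ is the cross Gramian of two bases of $\cF$, hence invertible, which forces $\rank(B(F,F,\Lambda,\Omega))\geq\dim(\cF)$; the reverse inequality is the same row-count observation you make. You instead sandwich the rank purely matricially: solvability of the defining equation $B(F,F,\Lambda,\Omega)C=B(F,F,\Lambda,\Lambda)$ puts the column space of the right-hand side inside that of the bridge matrix, and in the Parseval case the right-hand side is (up to conjugate transpose) the Gram matrix of $\{f_j:j\in\Lambda\}$, whose rank is $\dim(\cF)$ by the standard argument you give. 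Your version is shorter, stays entirely at the level of the bridge equation, and isolates exactly where Parsevalness enters (full rank of $B(F,F,\Lambda,\Lambda)$) — and your observation that Example \ref{nilpex} is precisely a non-Parseval case where $B(F,G,\Lambda,\Lambda)$ drops rank is apt. What the paper's construction buys in exchange is slightly more information: it exhibits an explicit invertible square minor $B(F,F,\Lambda_0,\Omega_0)$, i.e.\ a sub-bridge of size $\dim(\cF)$ that is robust via an invertible bridge matrix, which ties back to Theorem \ref{cond1} and the computational scheme of the paper. Your handling of the ``in particular'' clause is also fine. (One cosmetic point: the direction you call ``forward'' is the ``rank implies robust'' implication covered by Theorem \ref{cond1}; the paper's labels ``if''/``only if'' in its own proof are swapped relative to the statement, so just make sure your wording matches the implication you intend.)
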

\begin{proof}
The ``only if'' part holds by Theorem \ref{cond1} for the dual frame pair $(F,G)$ with $G = F$.  For the ``if'' part, suppose $F$ is a Parseval frame, $\Lambda$ is an erasure set, and $\Omega$ is a robust bridge set for $\Lambda$.  By definition, for each $j \in \Lambda$ there exists $f_j ' \in \spn \{ f_k : k \in \Omega \}$ such that $f_j - f_j ' \in \cF^\perp$, where $\cF = \spn \{ f_j : j \in \Lambda \}$.  Let $\kappa = \dim(\cF)$.  Let $P$ be the orthogonal projection onto $\cF$.  Then for $j \in \Lambda$, $f_j = Pf_j = Pf_j'$.  So $\cF = P \spn \{ f_j ' : j \in \Omega \}$.  It follows that $|\Omega| \geq \kappa$.  Since $\spn \{ Pf_j : j \in \Omega \} = \cF$, there is a subset $\Omega_0 \subset \Omega$ such that $\{ Pf_j : j \in \Omega_0 \}$ is a basis for $\cF$.  Similarly there is a subset $\Lambda_0 \subset \Lambda$ such that $\{ f_j : j \in \Lambda_0 \}$ is a basis for $\cF$.  Then $\{ f_j : j \in \Lambda_0 \}$ and $\{ Pf_j : j \in \Omega_0 \}$ are two bases for the same Hilbert space, so $|\Lambda_
0| = |\Omega_0|$, and the cross Gramian $(\la Pf_k,f_j \ra)_{j \in \Lambda_0, \, k \in \Omega_0}$ is invertible (see the preliminaries), so it has rank $\kappa$.  But for each $j \in \Lambda_0$ and $k \in \Omega_0$,
$$\la Pf_k,f_j \ra = \la f_k,Pf_j \ra = \la f_k,f_j \ra,$$
so $(\la Pf_k,f_j \ra)_{j \in \Lambda_0, \, k \in \Omega_0}$ is just the bridge matrix $B(F,G,\Lambda_0,\Omega_0)$.  Since it has rank $\kappa$, and it is a minor of $B(F,G,\Lambda,\Omega)$, $\text{rank}(B(F,G,\Lambda,\Omega)) \geq \kappa = \dim(\cF)$.  But $\dim(\cF) = \kappa$ implies that $B(F,G,\Lambda,\Omega)$ can not have more than $\kappa$ linearly independent rows, and hence $\text{rank}(B(F,G,\Lambda,\Omega)) \leq \kappa$.  Thus the rank of the bridge matrix must be $\kappa$.  It follows that if $\{ f_j : j \in \Lambda \}$ are linearly independent then $B(F,G,\Lambda,\Omega)$ is invertible as claimed.
\end{proof}

The next two examples illustrate the relationship between the minimal redundancy condition and the invertability of $R_\Lambda$.  For the examples, we consider the dual frame pair
\[ F = \left\{ (1,1)^T, (-1,1)^T, (-1,-1)^T, (1,-1)^T \right\} \]
and
\[ G = \left\{ (1,0)^T, \left(\frac{1}{2}, \frac{1}{2}\right)^T, \left(\frac{1}{2}, -\frac{1}{2}\right)^T, (1,0)^T \right\}. \]

Our first example is an example where the 2-nilpotent bridging algorithm works, but $R_\Lambda$ is not invertible.
\begin{example}
Let $\Lambda = \{ 1 \}$,
\[ R_\Lambda = \sum_{j = 2}^4 f_j \otimes g_j = I - f_1 \otimes g_1 = \left(
                                                                        \begin{array}{cc}
                                                                          1 & 0 \\
                                                                          0 & 1 \\
                                                                        \end{array}
                                                                      \right) - \left(
                                                                                  \begin{array}{cc}
                                                                                    1 & 0 \\
                                                                                    1 & 0 \\
                                                                                  \end{array}
                                                                                \right) = \left(
                                                                                            \begin{array}{cc}
                                                                                              0 & 0 \\
                                                                                              -1 & 1 \\
                                                                                            \end{array}
                                                                                          \right)
 \]
is not invertible.  Therefore, methods that require the inversion of $R_\Lambda$ won't work.  Furthermore,
\[ E_\Lambda = \left(
                         \begin{array}{cc}
                           1 & 0 \\
                           1 & 0 \\
                         \end{array}
                       \right)
 \]
is idempotent, so Neumann series approximations also fail.  However, since $\la f_1,g_2 \ra \not = 0$ and $\la f_1,g_4 \ra \not = 0$, example \ref{one erasure} shows that nilpotent bridging works with $\Omega = \{ 2 \}$ or $\Omega = \{ 4 \}$.  Note that $\Omega = \{3 \}$ won't work for Nilpotent bridging since $\la f_1,g_3 \ra = 0$. \qed
\end{example}

While for robustness $\Lambda$ needs to satisfy the minimal redundancy condition with respect to $G$, the second example shows that $\Lambda$ need not satisfy the minimal redundancy condition with respect to $F$.

\begin{example}
Let $\Lambda = \{ 2,4 \}$, and $\Omega = \{ 1,3 \}$.  Then $\Lambda$ does not satisfy the minimal redundancy condition for $F$.  But, we have
\begin{eqnarray*}
f_2,f_4 &\perp& g_2 - 0g_1 - 0g_3 \ \ \text{and} \\
f_2,f_4 &\perp& g_4 - g_1 - 0 g_3.
\end{eqnarray*}
Letting $f = (4,2)^T$, we get
\[ f_R = R_\Lambda f = (f_1 \otimes g_1)(f) + (f_3 \otimes g_3)(f) = (3,3)^T \]
and
\[ f_B = B_\Lambda f = (f_2 \otimes 0)(f) + (f_4 \otimes g_1)(f) = (4,-4)^T. \]
So,
\[ \tilde{f} = f_R + f_B = (7,-1)^T. \]
We have
\[ f_{\tilde{E}} = \tilde{E}_\Lambda f_R = (f_2 \otimes (g_2 - 0g_1 - 0g_3))(f_R) + (f_4 \otimes (g_4 - g_1 - 0g_3))(f_R) = (-3,3)^T. \]
Therefore we recover our original vector as
\[ \tilde{f} + f_{\tilde{E}} = (4,2)^T. \] \qed
\end{example}

Consider a dual frame pair $(F,G)$ with erasure set $\Lambda$ and bridge set $\Omega$.  Computer experiments indicated  that if $|\Omega| < |\Lambda|$, then $|\sigma(\tilde{E}_\Lambda) \setminus \{ 0 \}| = |\Lambda| - |\Omega|$.  So, if one chooses a bridge set that is too small, $\tilde{E}_\Lambda$ will have nonzero eigenvalues, but may have fewer nonzero eigenvalues than $E_\Lambda$ (the error operator without bridging).  The following gives a mathematical proof of this fact.

\begin{theorem}\label{numev}
Let $(F,G)$ be a dual frame pair.  Assume $\Lambda$ satisfies the minimal redundancy condition with respect to $G$, and $|\Lambda| = L$.  Then, there is a bridge set $\Omega$ of any size $M \leq L$ so that $|\sigma(\tilde{E}_\Lambda) \setminus\{ 0 \}| \leq L - M$.
\end{theorem}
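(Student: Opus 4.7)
The plan is to reduce the spectral bound to a rank bound, then construct $\Omega$ so that the reduced error operator has rank at most $L-M$. Let $\mathcal{F} = \spn\{f_j : j \in \Lambda\}$, let $q = \dim(\mathcal{F}) \leq L$, and let $P$ denote the orthogonal projection of $\cH$ onto $\mathcal{F}$. Since $\tilde{E}_\Lambda = \sum_{k \in \Lambda} f_k \otimes (g_k - g_k')$ always has range contained in $\mathcal{F}$, the block decomposition of $\tilde{E}_\Lambda$ with respect to $\cH = \mathcal{F} \oplus \mathcal{F}^\perp$ has zero lower row, so $\sigma(\tilde{E}_\Lambda) \setminus \{0\} = \sigma(\tilde{E}_\Lambda|_{\mathcal{F}}) \setminus \{0\}$; and because $\tilde{E}_\Lambda|_{\mathcal{F}}$ acts on a finite-dimensional space, $|\sigma(\tilde{E}_\Lambda|_{\mathcal{F}}) \setminus \{0\}| \leq \rank(\tilde{E}_\Lambda|_{\mathcal{F}})$. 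It therefore suffices to arrange $\rank(\tilde{E}_\Lambda|_{\mathcal{F}}) \leq L-M$.

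Minimal redundancy gives $\cspn\{g_\ell : \ell \in \Lambda^c\} = \cH$; applying the continuous projection $P$ and using that $\mathcal{F}$ is finite-dimensional upgrades this to $\spn\{Pg_\ell : \ell \in \Lambda^c\} = \mathcal{F}$. Set $r = \min(M,q)$. Select $\omega_1,\ldots,\omega_r \in \Lambda^c$ so that $\{Pg_{\omega_i}\}_{i=1}^r$ is linearly independent, put $W = \spn\{Pg_{\omega_i} : 1 \leq i \leq r\} \subseteq \mathcal{F}$ (so $\dim W = r$), and extend arbitrarily to $\Omega \subseteq \Lambda^c$ with $|\Omega| = M$. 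Let $P_W$ denote the orthogonal projection of $\mathcal{F}$ onto $W$. For each $k \in \Lambda$ the element $P_W Pg_k$ has a unique expansion $P_W Pg_k = \sum_{i=1}^r a_{k,i} Pg_{\omega_i}$; set
\[ g_k' := \sum_{i=1}^r a_{k,i}\, g_{\omega_i} \in \spn\{g_\ell : \ell \in \Omega\}. \]
Then $Pg_k' = P_W Pg_k$, so $P(g_k - g_k') = Pg_k - P_W Pg_k$ lies in $\mathcal{F} \ominus W$.

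From $\tilde{E}_\Lambda|_{\mathcal{F}} = \sum_{k \in \Lambda} f_k \otimes P(g_k - g_k')$ we see that the range of the adjoint is contained in $\mathcal{F} \ominus W$, which has dimension $q - r$. Hence $\rank(\tilde{E}_\Lambda|_{\mathcal{F}}) \leq q - r$; this equals $q - M \leq L - M$ in the case $M \leq q$ and equals $0 \leq L - M$ in the case $M > q$, so in either case $|\sigma(\tilde{E}_\Lambda) \setminus \{0\}| \leq L - M$.

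The main conceptual step is the reduction to working inside the finite-dimensional subspace $\mathcal{F}$, coupled with the observation that absorbing the $W$-component of each $Pg_k$ via $g_k'$ forces the functional part of $\tilde{E}_\Lambda|_{\mathcal{F}}$ into $\mathcal{F} \ominus W$. The minor technical point that could trip one up is passing from the closed-span statement of minimal redundancy to the sharper identity $\spn\{Pg_\ell : \ell \in \Lambda^c\} = \mathcal{F}$ needed to extract the $r$ linearly independent projections, but this is immediate from continuity of $P$ and the finite-dimensionality of $\mathcal{F}$.
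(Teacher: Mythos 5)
Your proof is correct, but it takes a genuinely different route from the paper's. The paper first invokes its existence theorem for robust bridge sets to produce a full nilpotent bridging on a set $\Omega'$ of size $\dim(\mathcal{F})$, then truncates the coefficient expansions to the first $M$ indices and writes $\tilde{E}_\Lambda = N + A$ with $N$ nilpotent of index $2$, $NA = 0$, and $\rank(A) \leq L-M$; the key step is that $(N+A)x = \lambda x$ with $\lambda \neq 0$ forces $Nx = 0$, so every nonzero eigenvalue of $\tilde{E}_\Lambda$ is an eigenvalue of the low-rank piece $A$. You instead bypass the robust-bridge-set theorem entirely: you reduce the eigenvalue count to $\rank(\tilde{E}_\Lambda|_{\mathcal{F}})$ via the block-triangular structure over $\mathcal{H} = \mathcal{F} \oplus \mathcal{F}^\perp$, use minimal redundancy only through the elementary identity $\spn\{Pg_\ell : \ell \in \Lambda^c\} = \mathcal{F}$, and then choose $g_k'$ as the $W$-best-approximation of $Pg_k$ so that the functional parts land in $\mathcal{F} \ominus W$. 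Your argument is more self-contained and handles the case $M > \dim(\mathcal{F})$ explicitly (which the paper's truncation construction silently assumes away), while the paper's decomposition $N+A$ with $NA=0$ exhibits $\tilde{E}_\Lambda$ as a low-rank perturbation of a $2$-nilpotent operator, which ties the result more directly to the bridging framework and makes visible that the nonzero spectrum is exactly that of a rank-$(L-M)$ operator. Both arguments in fact yield the slightly sharper bound $\dim(\mathcal{F}) - M$ when $M \leq \dim(\mathcal{F})$. One small shared caveat: producing a bridge set of exact cardinality $M$ requires $|\Lambda^c| \geq M$, a hypothesis implicit in the theorem statement that neither your proof nor the paper's addresses.
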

\begin{proof}
By Theorem \ref{2-Nilpotent Bridging Theorem}, we can find a robust bridge set $\Omega ' \subset \Lambda^c$ satisfying $|\Omega'| < L$.  That is, for each $k \in \Lambda$ we can find
\[ g_k ' = \sum_{j \in \Omega '} c_j^{(k)} g_j \]
so that $g_k ' \perp \spn\{ f_j : j \in \Lambda \}$.  Assume that $\Omega ' = \{ \omega_1, \cdots, \omega_{|\Omega '|} \}$.  Let $\Omega = \{ \omega_1, \cdots, \omega_M \}$ and
\[ g_k '' = \sum_{j \in \Omega} c_j^{(k)} g_j. \]
Then,
\[ \tilde{E}_\Lambda = \sum_{k \in \Lambda} f_k \otimes (g_k - g_k '') = \sum_{k \in \Lambda} f_k \otimes (g_k - g_k ') + \sum_{k \in \Lambda} f_k \otimes (g_k ' - g_k ''). \]
Let $N = \tilde{E}_\Lambda = \sum_{k \in \Lambda} f_k \otimes (g_k - g_k ')$, and $A = \sum_{k \in \Lambda} f_k \otimes (g_k ' - g_k '')$.  Then, it is easily verified that $N$ is nilpotent of index 2, and $NA = 0$.  Since $\text{range}(A^*) \subset \{ g_k ' - g_k '' : k \in \Lambda \} \subset \{ g_{\omega_k} : k = M+1, \cdots, |\Omega'| \}$, the rank of $A$ is at most $L-M$.

Let $\lambda \in \sigma(N+A) \setminus \{ 0 \}$.  Both $N$ and $A$ are finite rank operators, so $\lambda$ must be an eigenvalue of $N+A$.  Thus, there exists $x \in \cH$ so that
\[ (N + A) x = \lambda x. \]
Multiplying by $N$ on the left on both sides yields
\[ 0 = \lambda Nx. \]
Since $\lambda \not = 0$, we have $Nx = 0$.  Thus, $Ax = \lambda x$ and $\lambda \in \sigma(A)$.  Since $A$ can have at most $L - M$ distinct eigenvalues, it follows that $\tilde{E}_\Lambda$ has at most $L - M$ nonzero eigenvalues.
\end{proof}

\section{Applications to Sampling Theory}

There are well-known deep established connections between frame theory and modern sampling theory.  We cite for instance the excellent references (\cite{BF}, \cite{G}, \cite{Z}).    We note that a good account of sampling theory for our purposes is contained in Chapter 9 of \cite{HKLW}.  Let $X$ be a metric space and let $\mu$ be a Borel measure on $X$.  Let $\cH$ be a closed subspace of $L^2(X,\mu)$ consisting of continuous functions.  Let $T = \{ t_j \}_{j \in \J} \subset X$ and define the sampling transfrom $\Theta$ mapping $\cH$ into the complex sequences by $\Theta(f) = (f(t_j))_{j \in \J}$.  If $\Theta: \cH \rightarrow \ell^2(\J)$ and is bounded, then the point evaluation functionals $\gamma_j : \cH \rightarrow \C$ defined by $\gamma_j(f) = f(t_j)$ are bounded, and hence by the Riesz Representation Theorem, $\gamma_j(f) = \la f,g_j \ra$ for some $g_j \in \cH$.  If the sampling transform is also bounded below, then $\{ g_j \}_{j \in \J}$ forms a frame for $\cH$, and thus we can find some dual $\{
f_j \}_{j \in \cH}$.  We then have the identity
\begin{equation}
f = \sum_{j \in \J} \la f,g_j \ra f_j = \sum_{j \in \J} f(t_j) f_j \ \ \ \forall f \in \cH.
\end{equation}
We will refer to $(X,F,T)$ as a sampling scheme for $\cH$.  The most well known sampling scheme comes from the Shannon-Whittaker Sampling Theorem.  For this scheme, $\cH = PW[-\pi,\pi]$, $T = p \Z$ ($p \in (0,1]$), $f_j = \sinc(\pi(t - jp))$. Then $g_j = p \, \sinc(\pi(t - jp))$,  where $\sinc(x) = \frac{\sin x}{x}$.

Let $\Lambda$ be an erasure set for a sampling scheme $(X,F,T)$, with corresponding bridge set $\Omega$.  We can think of the erased coefficients as either $\la f,g_j \ra$ or as $f(t_j)$ for $j \in \Lambda$.  For this case, the bridge matrix is
\begin{equation}
B(F,G,\Lambda,\Omega) = \left( \la f_j,g_k \ra \right)_{j \in \Lambda, \, k \in \Omega} = \left( f_j(t_k) \right)_{j \in \Lambda, \, k \in \Omega}.
\end{equation}
Similarly, $B(F,G,\Lambda,\Lambda) = (f_j(t_k))_{j,k \in \Lambda}$.  Note that these matrices only involve the sampled values of the $\{f_j\}$ over the points $\{t_k\}$ and do not explicitly involve the $\{g_k\}$. Let us simply write $B(\Lambda,\Omega)$ and $B(\Lambda,\Lambda)$ for these two matrices. Then Theorem \ref{algorithm} becomes the following Theorem:

\begin{theorem}
Let $(X,F,T)$ be a sampling scheme with erasure set $\Lambda$ satisfying the minimal redundancy condition, and $\Omega$ be a robust bridge set for $\Lambda$.  Suppose $C = \left( \overline{c_j^{(k)}} \right)_{j \in \Omega, \, k \in \Lambda}$ solves the bridging equation $$B(\Lambda,\Omega) C = B(\Lambda,\Lambda),$$ where   $B(\Lambda,\Omega) =  \left( f_j(t_k) \right)_{j \in \Lambda, \, k \in \Omega}$ and $B(\Lambda,\Lambda) = (f_j(t_k))_{j,k \in \Lambda}$.
Then $$(f(t_j))_{j \in \Lambda} = C^T ((f(t_j))_{j \in \Omega} - (f_R(t_j))_{j \in \Omega}) + (f_R(t_j))_{j \in \Lambda}.$$

\end{theorem}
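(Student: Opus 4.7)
The plan is to observe that this sampling-theoretic statement is a direct translation of the earlier Theorem \ref{algorithm} once we use the Riesz-representation identification $\la f,g_j\ra = f(t_j)$ provided at the start of the section. So the work is essentially organizational: verify that each object in the sampling statement coincides with the corresponding object in the abstract dual-frame-pair statement, and then quote Theorem \ref{algorithm}.

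First I would record the key identification. Under the hypothesis that the sampling transform $\Theta:\cH\to\ell^2(\J)$ is bounded and bounded below, the Riesz vectors $\{g_j\}$ constructed in the preamble of this section satisfy $\la h,g_j\ra = h(t_j)$ for every $h\in\cH$. Applying this pointwise identity with $h = f_j$ gives $\la f_j,g_k\ra = f_j(t_k)$, so the abstract bridge matrices become
\[
B(F,G,\Lambda,\Omega) = (\la f_j,g_k\ra)_{j\in\Lambda,\,k\in\Omega} = (f_j(t_k))_{j\in\Lambda,\,k\in\Omega} = B(\Lambda,\Omega),
\]
and similarly $B(F,G,\Lambda,\Lambda) = B(\Lambda,\Lambda)$. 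Thus the equation $B(F,G,\Lambda,\Omega)C = B(F,G,\Lambda,\Lambda)$ in Theorem \ref{algorithm} is literally the bridging equation $B(\Lambda,\Omega)C = B(\Lambda,\Lambda)$ appearing here, and a solution $C$ of one is a solution of the other.

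Next I would translate the coefficient vectors. With $\alpha_j = \la f,g_j\ra$ and $\beta_j = \la f_R,g_j\ra$ as in Theorem \ref{algorithm}, applying $\la \cdot,g_j\ra = (\cdot)(t_j)$ to $f$ and to the partial reconstruction $f_R = R_\Lambda f\in\cH$ yields $\alpha_j = f(t_j)$ and $\beta_j = f_R(t_j)$. Likewise $(\la f,g_j\ra)_{j\in\Lambda} = (f(t_j))_{j\in\Lambda}$. Substituting these identifications into the conclusion
\[
(\la f,g_j\ra)_{j\in\Lambda} = C^T\bigl((\alpha_j)_{j\in\Omega} - (\beta_j)_{j\in\Omega}\bigr) + (\beta_j)_{j\in\Lambda}
\]
of Theorem \ref{algorithm} gives exactly the displayed reconstruction
\[
(f(t_j))_{j\in\Lambda} = C^T\bigl((f(t_j))_{j\in\Omega} - (f_R(t_j))_{j\in\Omega}\bigr) + (f_R(t_j))_{j\in\Lambda},
\]
which is the desired formula.

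The only thing to check beyond bookkeeping is that the hypotheses of Theorem \ref{algorithm} transfer: $(F,G)$ is a dual frame pair by construction of the Riesz vectors in a sampling scheme; the minimal redundancy condition on $\Lambda$ and the robustness of $\Omega$ are assumed by hypothesis in the present theorem. There is therefore no genuine obstacle — the one line that deserves care is the verification of the bridge-matrix identification via $\la f_j,g_k\ra = f_j(t_k)$, since after that every quantity in the statement matches Theorem \ref{algorithm} on the nose.
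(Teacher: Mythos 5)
Your proposal is correct and matches the paper's own treatment: the paper explicitly presents this theorem as Theorem \ref{algorithm} rewritten via the identification $\la h,g_j \ra = h(t_j)$, so that $B(F,G,\Lambda,\Omega) = (f_j(t_k))_{j \in \Lambda,\, k \in \Omega}$ and the coefficient vectors become sampled values, exactly as you verify. Nothing is missing.
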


\section{Generic Duals}

In this section we deal only with finite frames in finite dimensional Hilbert spaces.  Assume that $\cH$ is an $n$-dimensional Hilbert space.  We denote the set of $N$-tuples of vectors in $\cH$ by $\cH^N$.  The space $\cH^N$ can be equipped with many equivalent norms, but the one we will use is defined by $\lb F \rb := \max_{1 \leq j \leq N} \lb f_j \rb$ for $F = \{ f_j \}_{j=1}^N \in \cH^N$.  Let $F = \{ f_j \}_{j=1}^N$ be a frame in $\cH^N$.  For a frame, $F$, we define $F^* = \{ G \in \cH^N : (F,G) \text{ is a dual frame pair} \}$ and call it the {\it dual set} of $F$.

In the frame literature, a class of frames is sometimes called generic if it is open and dense in the set of all frames (c.f. \cite{ACM}, \cite{BCE}, \cite{LD}).  We will say that a class of duals to a given frame $F$ is generic if it is open and dense in the relative topology on $F^*$ inherited as a subspace of $\cH^N$.

\begin{proposition}
$F^*$ is a closed, convex subset of $\cH^N$.
\end{proposition}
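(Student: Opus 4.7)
The plan is to recognize that $(F,G)$ being a dual frame pair is equivalent to a single affine operator equation in $G$, which immediately exhibits $F^*$ as a level set of a continuous affine (in fact conjugate-linear) map. Specifically, the reconstruction formula $f = \sum_j \la f,g_j \ra f_j$ for all $f \in \cH$ is equivalent to the operator identity $\sum_{j=1}^N f_j \otimes g_j = I$. So I define
\[ \Phi : \cH^N \to B(\cH), \qquad \Phi(G) = \sum_{j=1}^N f_j \otimes g_j, \]
and observe that $F^* = \Phi^{-1}(\{ I \})$.

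Since $(x \otimes y)(z) = \la z,y \ra x$ is conjugate-linear in $y$, the map $\Phi$ is conjugate-linear in $G$; because $\cH^N$ is finite-dimensional, $\Phi$ is automatically continuous. Closedness of $F^*$ then follows immediately, as $\{ I \}$ is closed in $B(\cH)$ and the preimage of a closed set under a continuous map is closed.

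For convexity, I use that for real $t \in [0,1]$, conjugate-linearity of $\Phi$ reduces to genuine $\R$-linearity (since $\bar t = t$). Hence for $G,G' \in F^*$,
\[ \Phi(tG + (1-t)G') = t\Phi(G) + (1-t)\Phi(G') = tI + (1-t)I = I, \]
so $tG + (1-t)G' \in F^*$.

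I do not expect a real obstacle here; the only potential subtlety is verifying that the limit (or convex combination) $G$ is a genuine frame and not merely a Bessel sequence. But the identity $\sum_{j=1}^N f_j \otimes g_j = I$ forces the synthesis map of $G$ to be surjective, which in finite dimensions automatically makes $G$ a frame, so no separate verification is needed.
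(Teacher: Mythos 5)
Your proof is correct and follows essentially the same route as the paper: the convexity step is the identical computation $\sum_j f_j \otimes ((1-t)g_j + t g_j') = (1-t)I + tI = I$, and your continuity-of-$\Phi$ argument simply fills in the closedness step the paper dismisses as elementary. The closing remark that $\sum_j f_j \otimes g_j = I$ automatically makes $G$ a frame is also consistent with the paper's convention for dual pairs, so no gap remains.
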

\begin{proof}
Let $G,G' \in F^*$.  Then, for any $t \in [0,1]$, we see that
\[ \sum_{j=1}^N f_j \otimes \left((1-t)g_j + tg_j ' \right) = (1-t) \sum_{j=1}^N f_j \otimes g_j + t \sum_{j=1}^N f_j \otimes g_j ' = (1-t)I+tI = I. \]
Hence, $(1-t)G + tG' \in F^*$, so $F$ is convex.  The proof that $F^*$ is closed is elementary.
\end{proof}

Since $F^*$ is a closed subset of $\cH$, $F^*$ is a complete metric space with the norm topology inherited from $\cH^N$.

\begin{theorem}
Let $\Lambda$ be an erasure set for a frame $F$ with the minimal redundancy condition and let $\{ g_j \}_{j \in \Lambda}$ be assigned arbitrarily.  Then, $\{ g_j \}_{j \in \Lambda}$ can be extended to a dual frame $\{ g_j \}_{j=1}^N \in F^*$.
\end{theorem}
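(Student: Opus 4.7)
The plan is to reduce the problem to an operator equation on $\cH$. Once $\{g_j\}_{j \in \Lambda}$ has been fixed, the requirement $G \in F^*$ becomes
\[
\sum_{j \in \Lambda^c} f_j \otimes g_j \;=\; I - \sum_{j \in \Lambda} f_j \otimes g_j \;=:\; T.
\]
So the task is to realize the prescribed operator $T \in B(\cH)$ as $\sum_{j \in \Lambda^c} f_j \otimes g_j$ for a suitable choice of $\{g_j\}_{j \in \Lambda^c}$.

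First I would invoke the earlier lemma (Lemma 3.1): since $\Lambda$ satisfies the minimal redundancy condition, $\{f_j : j \in \Lambda^c\}$ is a frame for $\cH$, and hence its frame operator $R := \sum_{j \in \Lambda^c} f_j \otimes f_j$ is invertible. This is the only place the redundancy hypothesis enters. Then I would define, for each $j \in \Lambda^c$,
\[
g_j \;:=\; T^* R^{-1} f_j,
\]
and verify directly that this works. For any $x \in \cH$, using self-adjointness of $R^{-1}$,
\[
\sum_{j \in \Lambda^c} (f_j \otimes g_j)(x) \;=\; \sum_{j \in \Lambda^c} \langle x, T^* R^{-1} f_j\rangle\, f_j \;=\; \sum_{j \in \Lambda^c} \langle R^{-1} T x, f_j\rangle\, f_j \;=\; R(R^{-1} T x) \;=\; T x,
\]
so the operator equation is satisfied and consequently $\sum_{j=1}^N f_j \otimes g_j = I$.

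Finally I need to check that the extended sequence $G = \{g_j\}_{j=1}^N$ is itself a frame (not merely a Bessel sequence satisfying the dual relation). In the finite-dimensional setting this is automatic: the identity $\sum_j f_j \otimes g_j = I$ forces every $x$ to be expressible as $x = \sum_j \langle x, g_j\rangle f_j$, so $x \perp g_j$ for all $j$ implies $x = 0$; thus $\{g_j\}$ spans $\cH$ and is a frame. Hence $G \in F^*$ with the desired restriction, completing the proof.

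I do not anticipate a real obstacle here: the whole argument is essentially the observation that the synthesis operator $\Theta^*$ of the reduced frame $\{f_j : j \in \Lambda^c\}$ is surjective, so it admits the explicit right inverse $\Theta R^{-1}$, which makes the range of $G \mapsto \sum_j f_j \otimes g_j$ (as a function of the coordinates in $\Lambda^c$) equal to all of $B(\cH)$. The only mildly nontrivial step is recognizing that the finite dimension of $\cH$ lets us skip any lower frame bound verification for $G$.
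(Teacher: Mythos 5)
Your proof is correct, and it takes a somewhat more direct route than the paper's. Both arguments rest on the same engine: by Lemma 3.1 the minimal redundancy condition makes $\{f_j : j \in \Lambda^c\}$ a frame, and then any operator $S$ can be synthesized over $\Lambda^c$ as $S = \sum_{j \in \Lambda^c} f_j \otimes (S^* k_j)$ where $\{k_j\}$ is a dual of the reduced frame. The paper organizes this in two steps: it first shows that an arbitrary assignment $\{h_j\}_{j\in\Lambda}$ extends to a system with $\sum_{j=1}^N f_j \otimes h_j = 0$ (applying the identity above to $A = \sum_{j\in\Lambda} f_j \otimes h_j$ with an arbitrary dual $\{k_j\}$ of the reduced frame), and then adds this zero-sum completion to a pre-existing dual $G'$ of $F$, which also displays the affine structure of $F^*$ and meshes with the convexity result proved just before. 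You instead solve the operator equation $\sum_{j\in\Lambda^c} f_j \otimes g_j = T$, $T = I - \sum_{j\in\Lambda} f_j \otimes g_j$, in one stroke by choosing the canonical dual $k_j = R^{-1}f_j$ of the reduced frame and setting $g_j = T^* R^{-1} f_j$; this avoids invoking any pre-existing dual of $F$ and yields an explicit closed-form extension, at the cost of not exhibiting the general affine family of solutions. Your final observation that $\sum_j f_j \otimes g_j = I$ forces $\{g_j\}$ to span $\cH$, hence to be a frame in this finite-dimensional setting, correctly closes the membership $G \in F^*$.
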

\begin{proof}
We first show that under the same conditions on $F$, the set $\{ h_j \}_{j \in \Lambda}$ can be extended to $\{ h_j \}_{j=1}^N$ so that $\sum_{j=1}^N f_j \otimes h_j = 0$.  Let $A = \sum_{j \in \Lambda} f_j \otimes h_j$.  Let $\{ k_j \}_{j \in \Lambda^c}$ be a dual to the reduced frame $\{ f_j \}_{j \in \Lambda^c}$.  Then, $I = \sum_{j \in \Lambda^c} f_j \otimes k_j$.  So,
\[ A = \left( \sum_{j \in \Lambda^c} f_j \otimes k_j \right) A = \sum_{j \in \Lambda^c} f_j \otimes (A^* k_j). \]
For each $j \in \Lambda^c$, let $h_j = -A^* k_j$.  Then,
\[ \sum_{j=1}^N f_j \otimes h_j = \sum_{j \in \Lambda^c} f_j \otimes h_j + \sum_{j \in \Lambda} f_j \otimes h_j
= - \sum_{j \in \Lambda^c} f_j \otimes A^* k_j + A = A - \left( \sum_{j \in \Lambda^c} f_j \otimes k_j \right) A = A - IA = 0. \]

Now, let $\{ g_j ' \}_{j=1}^N \in F^*$.  Let $h_j = g_j - g_j '$ for $j \in \Lambda$.  Then, as above, we can extend $\{ h_j \}_{j \in \Lambda}$ to $\{ h_j \}_{j=1}^N$ so that $\sum_{j=1}^N f_j \otimes h_j = 0$.  For all $j$, let $\tilde{g_j} = g_j ' + h_j$.  So,
\[ \sum_{j =1}^N f_j \otimes \tilde{g_j} = \sum_{j=1}^N f_j \otimes g_j' + \sum_{j=1}^N f_j \otimes h_j = I + 0 = I. \]
Thus, $\{ \tilde{g}_j \}_{j=1}^N \in F^*$.  Furthermore, for $j \in \Lambda$,
\[ \tilde{g_j} = g_j ' + h_j = g_j ' + g_j - g_j ' = g_j. \]
Therefore, $\{ \tilde{g}_j \}_{j=1}^N$ is the desired extension of $\{ g_j \}_{j \in \Lambda}$.
\end{proof}

\begin{remark}
The above theorem shows that in the presence of the minimal redundancy condition, one can pick ``{\it designer duals}'' that satisfy certain conditions with respect to  $\Lambda$.  Theorem \ref{generic} (below) is our main result in this direction.
\end{remark}

In the frame literature (c.f. \cite{ACM}, \cite{CLTW}) a frame $F$ is said to have {\it spark $k$} if every collection of k vectors in $F$ is linearly independent, and $F$ is said to have the full spark property if it has spark $n$ (the dimension of $\cH$).  It is known that the set of full spark frames is an open dense set in $\cH^N$ (c.f. \cite{ACM}, \cite{LD}).

\begin{lemma}
Let $(F,G)$ be a dual frame pair with erasure set $\Lambda$, and bridge set $\Omega$ satisfying $|\Lambda| = |\Omega|$.  A necessary (but not sufficient) condition for $B(F,G,\Lambda,\Omega)$ to be an invertible matrix is
\begin{equation}
|\Lambda| \leq \min \left\{ n,N-n,\frac{N}{2} \right\}
\end{equation}
\end{lemma}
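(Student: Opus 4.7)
The plan is to establish each of the three inequalities $|\Lambda|\le N/2$, $|\Lambda|\le n$, and $|\Lambda|\le N-n$ separately, since each reflects a different structural constraint on $(\Lambda,\Omega)$ and on the bridge matrix.

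The bound $|\Lambda|\le N/2$ is essentially combinatorial and requires no use of invertibility. By the definition of a bridge set given just before equation \eqref{ME1}, the indices in $\Omega$ are drawn from $\Lambda^c$, so $\Lambda\cap\Omega=\emptyset$. Combined with $|\Lambda|=|\Omega|$ this forces $2|\Lambda|=|\Lambda\cup\Omega|\le N$.

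For $|\Lambda|\le n$, I exploit the fact that the $j$-th row of $B(F,G,\Lambda,\Omega)$ is $\bigl(\la f_{\lambda_j},g_{\omega_k}\ra\bigr)_{k\in\Omega}$, which depends linearly on $f_{\lambda_j}$. Thus any linear dependence $\sum_j c_j f_{\lambda_j}=0$ in $\cH$ transfers to $\sum_j c_j\,\mathrm{row}_j(B)=0$, contradicting invertibility. Hence $\{f_{\lambda_j}:j\in\Lambda\}$ must be a linearly independent set of $|\Lambda|$ vectors in an $n$-dimensional Hilbert space, giving $|\Lambda|\le n$. A symmetric argument on columns gives $|\Omega|\le n$, which is the same bound under $|\Lambda|=|\Omega|$.

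The only genuinely nontrivial inequality is $|\Lambda|\le N-n$, and this is where the paper's earlier machinery is needed. Since $B$ is an invertible square matrix of size $|\Lambda|$, $\rank(B)=|\Lambda|$; combined with the linear independence established in the previous paragraph, this equals $\dim(\cF)$ where $\cF=\spn\{f_j:j\in\Lambda\}$. Theorem \ref{cond1} then guarantees that $\Omega$ is a robust bridge set, and Theorem \ref{2-Nilpotent Bridging Theorem} converts robustness into the statement that $\Lambda$ satisfies the minimal redundancy condition with respect to $G$, i.e.\ $\cspn\{g_j:j\in\Lambda^c\}=\cH$. Since $N-|\Lambda|$ vectors are required to span the $n$-dimensional space $\cH$, $N-|\Lambda|\ge n$, completing the bound. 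The parenthetical claim that the condition is \emph{not} sufficient is a separate remark and can be witnessed by any small example in which $B$ degenerates through an accidental zero inner product even though the cardinality bounds are met.
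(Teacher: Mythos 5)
Your proof is correct and follows essentially the same route as the paper: the $N/2$ bound from disjointness of $\Lambda$ and $\Omega$, the $n$ bound from linear dependence of the rows (equivalently of $\{f_j : j\in\Lambda\}$) when $|\Lambda|>n$, and the $N-n$ bound by passing from invertibility to robustness of $\Omega$ and then to the minimal redundancy condition via Theorem \ref{2-Nilpotent Bridging Theorem}. The only cosmetic difference is that you invoke Theorem \ref{cond1} to get robustness, whereas the paper observes directly that $C=B(F,G,\Lambda,\Omega)^{-1}B(F,G,\Lambda,\Lambda)$ solves the bridge equation, which is robustness by definition.
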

\begin{proof}
If $|\Lambda| > n$, then the rows of the bridge matrix $B(F,G,\Lambda,\Omega)$ will be linearly dependent (since $\cH$ is an n-dimensional space).  Thus, $B(F,G,\Lambda,\Omega)$ will fail to be invertible.

Assume that $B(F,G,\Lambda,\Omega)$ is invertible, and $|\Lambda| > N-n$.  Then, since the bridge equation $B(F,G,\Lambda,\Omega) C = B(F,G,\Lambda,\Lambda)$ has a solution ($C = B(F,G,\Lambda,\Omega)^{-1} B(F,G,\Lambda,\Lambda)$), Theorem $\ref{2-Nilpotent Bridging Theorem}$ asserts that $\Lambda$ satisfies the minimal redundancy condition with respect to $G$.  Therefore, $|\Lambda^c| \geq n$.  So, $N = |\Lambda| + |\Lambda^c| > N-n +n > N$.  This is a contradiction, and therefore, if $B(F,G,\Lambda,\Omega)$ is invertible, then $|\Lambda| \leq N-n$.

If $|\Lambda| > \frac{N}{2}$, then $|\Lambda| + |\Omega| > N$.  This is a contradiction since $\Lambda$ and $\Omega$ are disjoint subsets of $\{ 1, \cdots, N \}$.
\end{proof}

\begin{corollary}
Assume that $F \in \cH^N$ satisfies the full spark property.  Let $\Lambda$ be an erasure set satisfying $|\Lambda| \leq \min \{ n, N-n, \frac{N}{2} \}$, and $\Omega$ be a bridge set satisfying $|\Lambda| = |\Omega|$ and $\Lambda \cap \Omega = \emptyset$.  Then there exists a dual frame $G$ to $F$ so that $B(F,G,\Lambda,\Omega)$ is invertible.
\end{corollary}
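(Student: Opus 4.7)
The plan is to use the "designer dual" theorem (the one proved immediately above the corollary) in reverse: instead of prescribing $g_j$ for $j \in \Lambda$, we prescribe the $g_j$ for $j \in \Omega$ in such a way that the bridge matrix $(\la f_{\lambda_j}, g_{\omega_k}\ra)_{j,k}$ is forced to be invertible, and then extend to a full dual of $F$.

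First I would observe what the full spark hypothesis together with the size bounds on $\Lambda$ buys us. Since $|\Lambda| \leq n$, the set $\{f_{\lambda_j}\}_{j=1}^L$ consists of $L \leq n$ vectors drawn from a full-spark frame, hence is linearly independent. Because these vectors are linearly independent in the $n$-dimensional Hilbert space $\cH$, a standard biorthogonality argument (invert the $L \times L$ Gram matrix $(\la f_{\lambda_j}, f_{\lambda_k}\ra)_{j,k}$ and take appropriate linear combinations) produces vectors $v_1, \dots, v_L \in \spn\{f_{\lambda_1}, \dots, f_{\lambda_L}\} \subset \cH$ with $\la f_{\lambda_j}, v_k\ra = \delta_{jk}$ for all $j,k$.

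Next I would define the prescribed values of $G$ on the bridge indices: set $g_{\omega_k} := v_k$ for $k = 1, \dots, L$. With this choice,
\[
B(F,G,\Lambda,\Omega) = \bigl(\la f_{\lambda_j}, g_{\omega_k}\ra\bigr)_{j,k} = I_L,
\]
which is trivially invertible. Note that so far $G$ is only defined on $\Omega$; the remaining entries are still free.

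The final step is to extend $\{g_{\omega_k}\}_{k=1}^L$ to a full dual frame $G = \{g_j\}_{j=1}^N$ of $F$, and here I would invoke the preceding designer-dual theorem with $\Omega$ playing the role of the erasure set. To apply it I must check that $\Omega$ satisfies the minimal redundancy condition for $F$, i.e.\ $\cspn\{f_j : j \in \Omega^c\} = \cH$. But $|\Omega| = |\Lambda| \leq N-n$ implies $|\Omega^c| \geq n$, and since $F$ has the full spark property, any $n$ vectors of $F$ are linearly independent, so $\{f_j : j \in \Omega^c\}$ already contains a basis of $\cH$. Thus the hypothesis is met, the extension theorem produces the desired $G \in F^*$, and the value of $B(F,G,\Lambda,\Omega)$ is unchanged (it depends only on $g_j$ for $j \in \Omega$), hence remains $I_L$. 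The bound $|\Lambda| \leq N/2$ is only needed to guarantee that $\Lambda$ and $\Omega$ can be chosen disjoint with equal size, so it plays no further role in the argument.

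I do not anticipate a serious obstacle; the main subtlety is merely bookkeeping about which role (erasure or bridge) each index set plays, and in particular recognizing that the extension lemma is flexible enough to prescribe $G$ on the bridge set rather than on the erasure set.
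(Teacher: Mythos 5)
Your proof is correct and follows essentially the same route as the paper: prescribe the values of $G$ on the bridge set $\Omega$ so that the bridge matrix is forced to be invertible, then extend to a full dual via the designer-dual extension theorem, using $|\Omega| \leq N-n$ and full spark to verify the minimal redundancy condition needed for the extension. The only (cosmetic) difference is your choice of prescribed vectors: the paper simply sets $g_{\omega} = f_{\varphi(\omega)}$ for a bijection $\varphi:\Omega\to\Lambda$, so that $B(F,G,\Lambda,\Omega)$ becomes the Gram matrix of the linearly independent set $\{f_j : j\in\Lambda\}$, whereas you use a biorthogonal system to make it the identity; both exploit the same invertibility of that Gram matrix.
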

\begin{proof}
To prove the corollary, define a bijection $\varphi: \Omega \rightarrow \Lambda$.  Let $\{ g_j \}_{j \in \Omega} = \{ f_{\varphi(j)} \}_{j \in \Omega}$.  By the previous lemma, we can extend $\{ g_j \}_{j \in \Omega}$ to a dual frame $G$ for $F$.  Then  $B(F,G,\Lambda,\Omega)$ is identical to the Gram matrix of the finite sequence $\{f_j : j \in \Lambda \}$, which is invertible since $\{f_j : j \in \Lambda \}$ is linearly independent.
\end{proof}

We say that a dual frame pair $(F,G)$ has {\it skew spark $k$} if for every erasure set $\Lambda$ with $|\Lambda| \leq k$, and any bridge set $\Omega \subset \Lambda^c$ satisfying $|\Lambda|=|\Omega|$, $B(F,G,\Lambda,\Omega)$ is invertible.  If $(F,G)$ has skew spark $\min\{ \frac{N}{2},n,N-n \}$, then $(F,G)$ is said to satisfy the full skew spark property.

\begin{proposition}
If the dual frame pair $(F,G)$ for $\cH$ has skew spark $k$, then $F$ and $G$ each have spark $k$.
\end{proposition}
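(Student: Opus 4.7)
The plan is a contrapositive argument: I will show that if $F$ fails to have spark $k$ then $(F,G)$ fails to have skew spark $k$, and the argument for $G$ is dual. Throughout, I will rely on the fact that for skew spark $k$ to be a nonvacuous condition, the preceding lemma forces $k \leq \min\{n, N-n, N/2\}$, which is what lets the arithmetic below go through.

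For the spark of $F$, suppose toward contradiction that some subset $\{f_j : j \in \Lambda\}$ with $|\Lambda| = m \leq k$ is linearly dependent, via scalars $(c_j)_{j \in \Lambda}$ not all zero satisfying $\sum_{j \in \Lambda} c_j f_j = 0$. Since $m \leq k \leq N/2$, I can pick any $\Omega \subset \Lambda^c$ with $|\Omega| = m$. By linearity of the inner product in its first argument, for every $\omega \in \Omega$,
\[ \sum_{j \in \Lambda} c_j \la f_j, g_\omega \ra = \la \sum_{j \in \Lambda} c_j f_j, g_\omega \ra = 0, \]
so the rows of the bridge matrix $B(F, G, \Lambda, \Omega) = (\la f_{\lambda_j}, g_{\omega_k} \ra)_{j,k}$ carry the nontrivial dependence $(c_j)$. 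Hence $B(F, G, \Lambda, \Omega)$ is singular, contradicting skew spark $k$.

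The argument for $G$ is the symmetric statement, using conjugate-linearity in the second slot. If $\{g_j : j \in \Omega_0\}$ with $|\Omega_0| = m \leq k$ is dependent via $\sum_{j \in \Omega_0} d_j g_j = 0$, pick any $\Lambda \subset \Omega_0^c$ with $|\Lambda| = m$; then for every $\lambda \in \Lambda$,
\[ \sum_{j \in \Omega_0} \overline{d_j} \la f_\lambda, g_j \ra = \la f_\lambda, \sum_{j \in \Omega_0} d_j g_j \ra = 0, \]
so the columns of $B(F, G, \Lambda, \Omega_0)$ are linearly dependent and $B$ is again singular, contradicting skew spark $k$. The proof is essentially bookkeeping; the only subtle point is ensuring one can place a bridge set of size $m$ into $\Lambda^c$ (respectively a $\Lambda$ of size $m$ into $\Omega_0^c$), which is where the inequality $k \leq N/2$ implicit in a nonvacuous skew-spark hypothesis is used, so I do not foresee any hard step.
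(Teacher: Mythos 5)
Your proof is correct and is essentially the paper's own argument: invertibility of the bridge matrix $B(F,G,\Lambda,\Omega)$ forces its rows (respectively columns) to be linearly independent, and the pairing $\la f_j,g_\omega\ra$ transfers any dependence among the $f_j$ (respectively $g_j$) to a dependence among those rows (columns); you merely run this in contrapositive form and spell out the computation the paper leaves implicit. The brief remark about needing $k\leq N/2$ to place a disjoint bridge set is a reasonable extra precaution, but no new idea is involved.
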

\begin{proof}
Let $\Lambda$ be an erasure set of cardinality $k$.  Let $\Omega$ be any subset of $\Lambda^c$ of cardinality $k$.  By hypothesis the matrix $B(F,G,\Lambda,\Omega)$ is invertible, so its rows and columns are linearly independent.  This implies that $\{f_j: j \in \Lambda\}$ is linearly independent.  Since $\Lambda$ was arbitrary, this shows that $F$ has spark $k$.  The proof for $G$ is analogous.

\end{proof}

This also shows that if $n \leq \min\{ \frac{N}{2},N-n \}$, then the full skew spark property implies the full spark property.

Let $\mathcal{G} = \{ G \in F^* : (F,G) \text{ has the full skew spark property} \}$.

\begin{theorem}\label{generic}
Assume that $F$ has the full spark property.  Then, $\mathcal{G} := \{ G \in F^* : (F,G) \text{ has the full skew spark property} \}$ is generic in $F^*$.
\end{theorem}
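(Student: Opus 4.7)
The plan is to write $\mathcal{G}$ as a finite intersection of open dense subsets of $F^*$, one for each pair $(\Lambda,\Omega)$ admissible in the definition of the full skew spark property. For each choice of $\Lambda,\Omega \subset \{1,\dots,N\}$ with $\Lambda \cap \Omega = \emptyset$, $|\Lambda| = |\Omega|$, and $|\Lambda| \leq \min\{n, N-n, N/2\}$, I would define
\[
U_{\Lambda,\Omega} := \{ G \in F^* : \det B(F,G,\Lambda,\Omega) \neq 0 \}.
\]
There are only finitely many such pairs, and unwinding the definition of the full skew spark property yields $\mathcal{G} = \bigcap_{(\Lambda,\Omega)} U_{\Lambda,\Omega}$. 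Since a finite intersection of open dense subsets of any topological space is itself open and dense, the whole theorem will reduce to showing that each single $U_{\Lambda,\Omega}$ is open and dense in $F^*$.

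Openness is the easy half. Each matrix entry $\la f_j, g_k \ra$ of $B(F,G,\Lambda,\Omega)$ is an affine function of the coordinates of $G$ (with $F$ held fixed), so $G \mapsto \det B(F,G,\Lambda,\Omega)$ is continuous on $F^*$, and $U_{\Lambda,\Omega}$ is the preimage of $\C \setminus \{0\}$ under this map.

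For density I would exploit the algebraic structure of $F^*$. Note that $F^*$ is an affine subspace of $\cH^N$, being the solution set of the single linear operator equation $\sum_j f_j \otimes g_j = I$ in the variable $G$, and that $G \mapsto \det B(F,G,\Lambda,\Omega)$ is a polynomial in the coordinates of the $g_j$'s. The preceding corollary, whose hypothesis is precisely the full spark property of $F$, furnishes at least one $G_0 \in F^*$ at which this determinant is nonzero. Hence the polynomial is not identically zero on $F^*$, so its zero locus is a proper algebraic subvariety of the affine space $F^*$ and is therefore nowhere dense. Thus $U_{\Lambda,\Omega}$ is dense in $F^*$.

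The main obstacle is the density step, or more precisely its reduction to the previous corollary: one must verify that the determinant remains a genuinely nontrivial polynomial after restriction to the affine slice $F^*$, and that the corollary's output, which produces a dual making \emph{one} bridge matrix invertible, can be invoked separately for every admissible pair $(\Lambda,\Omega)$. Once both are secured, the generic conclusion falls out of the finite-intersection principle.
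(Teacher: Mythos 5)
Your proposal is correct and follows essentially the same route as the paper: the same finite decomposition $\mathcal{G} = \bigcap_{(\Lambda,\Omega)} \{ G \in F^* : \det B(F,G,\Lambda,\Omega) \neq 0 \}$, the same continuity argument for openness, and the same appeal to the corollary (via full spark of $F$) to produce a dual with invertible bridge matrix for each admissible pair. The only divergence is the density step, where the paper joins a given $G_0$ to the witness $G_1$ along the segment $G_t = (1-t)G_0 + tG_1$ inside the convex set $F^*$ and uses that $\det B(F,G_t,\Lambda,\Omega)$ is a one-variable polynomial in $t$ with finitely many zeros, while you invoke the general fact that the zero locus of a not-identically-zero polynomial on the affine space $F^*$ is nowhere dense; both are valid, with the small caveat that the matrix entries are conjugate-linear (not affine) in $G$, so the determinant is a polynomial in the conjugated coordinates of $G$, which does not affect either argument.
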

\begin{proof}
Let $\Gamma = \{ \Lambda \subset \{ 1,\cdots,N \} : |\Lambda| \leq \min\{ \frac{N}{2},N-n,n \} \}$.  For a given $\Lambda \in \Gamma$, let $\Phi_\Lambda = \{ \Omega \subset \{ 1, \cdots,N \} : |\Omega| = |\Lambda|,\Omega \cap \Lambda = \emptyset \}$.  Then, $\mathcal{G} = \bigcap_{\Lambda \in \Gamma} \bigcap_{\Omega \in \Phi_\Lambda} \mathcal{G}_{\Lambda,\Omega}$, where $\mathcal{G}_{\Lambda, \Omega} = \{ G \in F^* : \det(B(F,G,\Lambda,\Omega)) \not = 0 \}$.  Since we are intersecting over all possible erasure sets and all corresponding bridge sets, the above intersection is finite.  So by the Baire category theorem, if we show that each $\mathcal{G}_{\Lambda, \Omega}$ is open and dense, then $\mathcal{G}$ will also be open and dense.

Fix an erasure set $\Lambda$, and a corresponding bridge set $\Omega$.  It is easily verified that the maps $G \xmapsto{\alpha} B(F,G,\Lambda,\Omega)$ and $B(F,G,\Lambda,\Omega) \mapsto \det(B(F,G,\Lambda,\Omega))$ are continuous.  So, $\mathcal{G}_{\Lambda, \Omega} = (\det \circ \alpha)^{-1} (\C \setminus \{0\})$ is an open set.

To show density of $\mathcal{G}_{\Lambda, \Omega}$, let $\epsilon > 0$, and assume that $G_0 \in F^* \setminus \mathcal{G}_{\Lambda, \Omega}$.  Since $F$ satisfies the full spark property, $\Lambda$ satisfies the minimal redundancy condition with respect to $F$.  Thus, by Corollary 5.5, there is a $G_1 \in F^*$ so that $\det(B(F,G_1,\Lambda,\Omega)) \neq 0$ .  Let $G_t = (1-t)G_0 + t G_1$.  By proposition 6.1, $G_t \in F^*$.  Furthermore, $\det(B(F,G_t,\Lambda,\Omega))$ is a polynomial in $t$ satisfying $\det(B(F,G_t,\Lambda,\Omega))(0) = 0$ and $\det(B(F,G_t,\Lambda,\Omega))(1) \not = 0$.  Thus, $\det(B(F,G_t,\Lambda,\Omega))$ has only finitely many zeros.  So, we can find $0 < t_0 < \frac{\epsilon}{\lb G_1 - G_0 \rb}$ so that $G_{t_0} \in \mathcal{G}_{\Lambda, \Omega}$.  Furthermore,
\[ \lb G_{t_0} - G_0 \rb = \lb (1-t_0)G_0 + t_0 G_1 - G_0 \rb = \lb t_0 (G_1 - G_0) \rb \leq |t_0| \lb G_1 - G_0 \rb < \epsilon. \]
Hence, $\mathcal{G}_{\Lambda, \Omega}$ is dense in $F^*$.

Therefore, by the Baire-Category theorem, $\mathcal{G}$ is generic in $F^*$.
\end{proof}

In short, what we have proven in this section is that for most frames $F \in \cH^N$, and most duals $G$ to $F$, the pair $(F,G)$ satisfies the full skew spark property.

\begin{remark}

We found it convenient to present and prove the topological resuts of this section for the metric topology.  A similar argument can be used to obtain these for the Zariski topology.

\end{remark}

\section{Computing an Inverse for $R_\Lambda$}

In this section, we obtain a basis-free closed-form formula for the inverse of the partial reconstruction operator $R_\Lambda$ for a finite erasure set. By basis-free we mean that the computations do not depend on any preassigned basis for the space, and by closed-form we mean that it is of the same general form as $R_\Lambda$ is given in and does not require an iterative process such as the Neuman series  formula.  This gives a second method of perfect reconstruction from frame and sampling erasures in finitely many steps that applies when $R_\Lambda^{-1}$ exists.

Let $(F,G)$ be a dual frame pair indexed by $\J$, and $\Lambda$ be an erasure set.  Recall that
$$R_\Lambda = \displaystyle \sum_{j \in \J \setminus \Lambda} f_j \otimes g_j = I - \displaystyle \sum_{j \in \Lambda} f_j \otimes g_j.$$
We derive a simple method for computing inverses of operators of the form
$$R  = I - \displaystyle \sum_{j = 1}^L f_j \otimes g_j$$
that was motivated by our work on bridging.

\begin{proposition}
Assume that $R  = I - \sum_{j = 1}^L f_j \otimes g_j$ is invertible.  Then, $R^{-1}$  has the form $\sum_{j,k = 1}^L c_{jk} f_j \otimes g_k$ for some $c_{jk} \in \C$.
\end{proposition}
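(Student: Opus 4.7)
The plan is to make the direct ansatz $R^{-1} = \sum_{j,k=1}^L c_{jk}\, f_j \otimes g_k$ and derive a linear system for the coefficient matrix $C = (c_{jk})$ from the identity $RR^{-1} = I$. Applying the rank-one composition rule $(a \otimes b)(u \otimes v) = \langle u, b \rangle (a \otimes v)$, the product expands to
\[
RR^{-1} \;=\; \sum_{j,k} c_{jk}\, f_j \otimes g_k \;-\; \sum_{i,j,k} c_{jk}\, \langle f_j, g_i \rangle\, f_i \otimes g_k,
\]
which after reindexing the second sum becomes $\sum_{j,k} \bigl[(I_L - M^T)C\bigr]_{jk}\, f_j \otimes g_k$, where $M$ is the $L \times L$ matrix with $M_{ij} = \langle f_j, g_i \rangle$. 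The problem is then reduced to showing that the equation $(I_L - M^T) C = D$ admits a solution, where $D = (D_{jk})$ encodes a representation $I = \sum D_{jk}\, f_j \otimes g_k$ of the ambient identity in the span of $\{f_j \otimes g_k\}$.

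Next I would analyze the invariant-subspace structure of $R$ to justify invertibility of $I_L - M^T$. The operator $R - I = -\sum f_j \otimes g_j$ has range contained in the finite-dimensional subspace $V := \spn\{f_j\}_{j=1}^L$ and annihilates $W^\perp$ with $W := \spn\{g_j\}_{j=1}^L$; moreover a direct computation shows $R(V) \subseteq V$. Since $V$ is finite-dimensional and $R$ is invertible on $\cH$, the restriction $R|_V : V \to V$ is a bijection, and the matrix representing it in the spanning set $\{f_j\}$ is precisely $I_L - M^T$. Thus $I_L - M^T$ is invertible, and $C = (I_L - M^T)^{-1} D$ yields the claimed coefficients.

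The main obstacle is the very last ingredient, namely producing the coefficient matrix $D$ for the identity. A priori every operator of the form $\sum d_{jk}\, f_j \otimes g_k$ has range in $V$ and vanishes on $W^\perp$, so the expansion $I = \sum D_{jk}\, f_j \otimes g_k$ requires $V = W = \cH$. The proof must therefore extract from the hypothesis that $R$ is invertible the structural statement that $\{f_j\}_{j=1}^L$ and $\{g_j\}_{j=1}^L$ span $\cH$ (or reduce to that case by passing to $V$ as the ambient space), and then absorb the identity contribution into the $c_{jk}$ via the formula $C = (I_L - M^T)^{-1} D$. Once this structural point is pinned down, the rest of the proposition is the routine finite-dimensional matrix inversion exhibited above.
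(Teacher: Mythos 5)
There is a genuine gap, and it sits exactly where you located it: the identity term. Read literally, the displayed form $R^{-1}=\sum_{j,k=1}^{L}c_{jk}\,f_j\otimes g_k$ cannot hold in general: $R$ acts as the identity on $W^\perp=\spn\{g_j\}^\perp$, hence so does $R^{-1}$, whereas every operator of the form $\sum c_{jk}\,f_j\otimes g_k$ annihilates $W^\perp$. The statement is a typo for $R^{-1}=I+\sum_{j,k=1}^{L}c_{jk}\,f_j\otimes g_k$; the paper's own proof opens with ``we must show that $R^{-1}-I$ is a linear combination of the elementary tensors,'' and the theorem that follows uses the formula with the $I$. Your proposed repair --- extracting from invertibility of $R$ that $\spn\{f_j\}=\spn\{g_j\}=\cH$ --- is not available: it is false whenever $\dim\cH>L$ (e.g.\ any $f_j,g_j$ with $\lb\sum_j f_j\otimes g_j\rb<1$ gives an invertible $R$ by the Neumann series), and passing to $V=\spn\{f_j\}$ does not rescue the literal claim because $R^{-1}$ is still the identity off $W$. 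So the last step of your plan, producing $D$ with $I=\sum D_{jk}\,f_j\otimes g_k$, cannot be carried out.

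Once the statement is read with the ``$I+$,'' the paper's argument is short and bypasses your linear system entirely: with $E=\sum_{j=1}^{L}f_j\otimes g_j$, the identity $(I-R^{-1})R=R-I=-E$ together with invertibility of $R$ shows $\operatorname{ran}(I-R^{-1})\subseteq\spn\{f_j\}$; then expanding $-E=(I-R^{-1})(I-E)$ gives $R^{-1}=I+E-(I-R^{-1})E$, and $(I-R^{-1})E=\sum_{k}\bigl((I-R^{-1})f_k\bigr)\otimes g_k$ visibly lies in $\spn\{f_j\otimes g_k\}$. No spanning hypothesis and no invertibility of $I-M$ is needed at this stage; the coefficient equation and its unique solvability are the content of the subsequent theorem, where linear independence of $\{f_j\}$ is added. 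Two smaller points in your write-up: with your convention $M_{ij}=\la f_j,g_i\ra$ the coefficient of $f_j\otimes g_k$ in $RR^{-1}$ is $[(I_L-M)C]_{jk}$, not $[(I_L-M^{T})C]_{jk}$; and ``the matrix representing $R|_V$ in the spanning set $\{f_j\}$'' is not well defined when the $f_j$ are dependent, so invertibility of $R|_V$ does not by itself yield invertibility of $I_L-M$ --- that requires the eigenvector argument ($Mx=x$ forces $x_i=\la z,g_i\ra$ with $z=\sum_j x_jf_j\in\ker R$) used in the paper's next proof.
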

\begin{proof}

We must show that $R^{-1} - I$ is a linear combination of the elementary tensors
$\{f_j \otimes g_k\}_{j,k = 1}^L$.

Let $E = \sum_{j=1}^L f_j \otimes g_j$.  Note that $(I - R^{-1}) R = R - I = - E$.  Since $R$ is invertible, this shows that the range of $I - R^{-1}$ is contained in $\spn \{ f_j \}_{j=1}^L$.

 We have,
\[ -E = R - I = (I-R^{-1}) R = (I - R^{-1})(I-E) = I - R^{-1} - (I-R^{-1})E. \]
Therefore,
\[ R^{-1} = I + E - (I-R^{-1})E. \]
From above, we know that $(I-R^{-1})f_k = \sum_{j=1}^L b_{jk} f_j$ for some $b_{jk} \in \C$.  So,
\[ (I-R^{-1})E = \sum_{k=1}^L (I-R^{-1}) f_k \otimes g_k = \sum_{k=1}^L \sum_{j=1}^L b_{jk} f_j \otimes g_k \]
a linear combination of the $\{f_j \otimes g_k\}_{j,k = 1}^L$ .

Since $E$ is also a linear combination of the $f_j \otimes g_k$, $E - (I - R^{-1})E = \sum_{j,k = 1}^L c_{jk} f_j \otimes g_k$ for appropriate constants  $c_{jk} \in \C$.

So $R^{-1} = I + \sum_{j,k = 1}^L c_{jk} f_j \otimes g_k$.
\end{proof}

Although the elementary tensors $f_j \otimes g_k$ in the representation of $R^{-1}$ in Proposition 6.1 are generally not linearly independent and hence the coefficients $\{c_{jk}\}_{j,k = 1}^L$ are not unique, we can derive a simple matricial formula that gives a valid choice of the $c_{jk}$.

\begin{theorem}
Let $R = I - \sum_{j = 1}^L f_j \otimes g_j$, where $\{f_j\}_{j=1}^L , \{g_j\}_{j=1}^L$ are finite sequences and $\{f_j\}_{j=1}^L$ is linearly independent.  If $R$ is invertible, then a formula for the inverse is
\begin{equation}
R^{-1} = I + \sum_{j,k=1}^L c_{jk} f_j \otimes g_k
\end{equation}
where the coefficient matrix $C := (c_{jk})_{j,k=1}^L$ is given by
\begin{equation}
C = (I - M)^{-1}
\end{equation}
where $I$ is the $L \times L$ identity matrix and
\begin{equation}
M = G(\{f_1,...,f_L\}, \{g_1,...,g_L\}) :=
\left( \begin{array}{cccc}
 \la f_1,g_1 \ra & \la f_2,g_1 \ra & \cdots & \la f_L,g_1 \ra\\
\la f_1,g_2 \ra & \la f_2,g_2 \ra & \cdots & \la f_L,g_2 \ra\\
\vdots & \vdots & \ddots & \vdots\\
\la f_1,g_L \ra & \la f_2,g_L \ra & \cdots & \la f_L,g_L \ra
                                       \end{array} \right)
\end{equation}

\end{theorem}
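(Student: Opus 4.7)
My plan is to verify directly that $S := I + \sum_{j,k=1}^L c_{jk}\, f_j \otimes g_k$ with $C = (c_{jk}) = (I - M)^{-1}$ satisfies $RS = I$; since $R$ is invertible by hypothesis, multiplying on the left by $R^{-1}$ then forces $S = R^{-1}$. The preceding proposition already tells us that $R^{-1}$ must be of the form $I + \sum c_{jk}\, f_j \otimes g_k$, so the substance of the theorem is pinning down the explicit matrix $C$, and $(I-M)^{-1}$ is the natural candidate.

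The first step is to confirm $I - M$ is invertible, so that the proposed formula even makes sense. Put $E := \sum_{j=1}^L f_j \otimes g_j$ so that $R = I - E$. The range of $E$ lies in $\mathcal{F} := \spn\{f_1,\dots,f_L\}$, hence $\mathcal{F}$ is invariant under $R$. Since $\{f_j\}$ is linearly independent, it is a basis of $\mathcal{F}$, and the computation $E f_k = \sum_j \la f_k, g_j \ra f_j = \sum_j M_{jk} f_j$ identifies $M$ as the matrix of $E|_\mathcal{F}$ with respect to this basis. Hence $R|_\mathcal{F}$ is represented by $I - M$. Because $R$ is injective on $\mathcal{H}$ and $\mathcal{F}$ is finite-dimensional and $R$-invariant, $R|_\mathcal{F}$ is a bijection of $\mathcal{F}$, and so $I - M$ is invertible.

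The second step is a straightforward expansion using the rank-one composition rule $A(x \otimes y) = (Ax) \otimes y$ together with $E f_j = \sum_\ell M_{\ell j} f_\ell$. Carrying this out and reindexing,
\begin{align*}
RS &= (I - E)\Bigl(I + \sum_{j,k} c_{jk}\, f_j \otimes g_k\Bigr) \\
   &= I - E + \sum_{j,k} C_{jk}\, f_j \otimes g_k - \sum_{j,k} (MC)_{jk}\, f_j \otimes g_k \\
   &= I - E + \sum_{j,k} [(I - M)C]_{jk}\, f_j \otimes g_k.
\end{align*}
With $C = (I - M)^{-1}$ the bracketed matrix is the $L \times L$ identity, so the last sum collapses to $\sum_j f_j \otimes g_j = E$, yielding $RS = I - E + E = I$. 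The only genuinely nontrivial point is the invertibility of $I - M$; the rest is a mechanical manipulation of rank-one operators, and linear independence of $\{f_j\}$ is used only in that first step and not in the subsequent verification that $RS = I$.
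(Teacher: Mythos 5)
Your proof is correct, and it is organized differently from the paper's. The paper first invokes the preceding proposition to know that $R^{-1} = I + \sum_{j,k} c_{jk}\, f_j \otimes g_k$ for \emph{some} coefficients, expands $I = R^{-1}R$, reads off a sufficient linear system $C(I-M)=I$ by setting the coefficients of the tensors $f_j \otimes g_k$ to zero, and then shows $I-M$ is nonsingular by exhibiting a kernel vector of $R$ from any fixed vector of $M$ (using linear independence of $\{f_j\}$). You instead take the candidate $S = I + \sum_{j,k}[(I-M)^{-1}]_{jk}\, f_j\otimes g_k$ and verify $RS=I$ directly, which makes the proposition unnecessary as an ingredient and, more importantly, sidesteps the delicate point that the tensors $f_j\otimes g_k$ need not be linearly independent (so ``setting coefficients to zero'' in the paper is only a sufficient condition, a point the paper has to handle with its surrounding remark about non-uniqueness). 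Your justification that $I-M$ is invertible --- identifying $M$ as the matrix of $E|_{\mathcal{F}}$ on the $R$-invariant finite-dimensional subspace $\mathcal{F}=\spn\{f_1,\dots,f_L\}$ and using injectivity of $R$ --- is the same underlying fact as the paper's eigenvector computation, but stated more conceptually; this is also exactly where linear independence of $\{f_j\}$ enters (it makes $\{f_j\}$ a basis of $\mathcal{F}$), and you correctly observe that the remaining algebra does not use it. Both approaches cost the same amount of computation; yours buys a cleaner logical structure at the price of having to ``guess'' the formula rather than derive it.
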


\begin{proof}

By Proposition 6.1 we can write \\
$R^{-1} = I + \displaystyle \sum_{j = 1}^L \displaystyle \sum_{k = 1}^L c_{jk} f_j \otimes g_k$ for some $c_{jk} \in \C$. Compute:
\begin{eqnarray*}
I & = & R^{-1} R \\
& = & \left( I + \sum_{j = 1}^L \sum_{k = 1}^L c_{jk} f_j \otimes g_k \right) \left( I - \sum_{j = 1}^L f_j \otimes g_j \right) \\
& = & I + \sum_{j = 1}^L \sum_{k = 1}^L c_{jk} f_j \otimes g_k - \sum_{j = 1}^L f_j \otimes g_j - \sum_{j = 1}^L  \sum_{k = 1}^L \sum_{\ell = 1}^L c_{jk} (f_j \otimes g_k)(f_\ell \otimes g_\ell) \\
& = & I + \sum_{j = 1}^L \sum_{k = 1}^L c_{jk} f_j \otimes g_k - \sum_{j = 1}^L f_j \otimes g_j - \sum_{j = 1}^L  \sum_{\ell = 1}^L \sum_{k = 1}^L c_{jk} \la f_\ell,g_k \ra(f_j \otimes g_\ell) \\
& = & I + \sum_{j = 1}^L \sum_{k = 1}^L c_{jk} f_j \otimes g_k - \sum_{j = 1}^L f_j \otimes g_j - \sum_{\ell = 1}^L  \sum_{j = 1}^L \sum_{k = 1}^L c_{j\ell} \la f_k,g_\ell \ra(f_j \otimes g_k).
\end{eqnarray*}
In the last sum, we switched indices $k$ and $\ell$.  Thus,
\begin{equation*}
\sum_{j = 1}^L f_j \otimes g_j = \sum_{j = 1}^L \sum_{k = 1}^L c_{jk} f_j \otimes g_k - \sum_{\ell = 1}^L  \sum_{j = 1}^L \sum_{k = 1}^L c_{j\ell} \la f_k,g_\ell \ra(f_j \otimes g_k).
\end{equation*}

By simply setting the coefficients of the $f_j \otimes g_k$ to zero, we obtain the following system of equations.
\begin{equation}
c_{jk} - \sum_{\ell = 1}^L c_{j\ell} \la f_k,g_\ell \ra = \delta_{jk}
\end{equation}

For a fixed value of $j$,
we have the system
\[ \left(\delta_{jk} \right)_{k = 1, \cdots, L}^T = \left( \begin{array}{cccc}
1 - \la f_1,g_1 \ra & \la f_1,g_2 \ra & \cdots & \la f_1,g_L \ra\\
\la f_2,g_1 \ra & 1 - \la f_2,g_2 \ra & \cdots & \la f_2,g_L \ra\\
\vdots & \vdots & \ddots & \vdots\\
\la f_L,g_1 \ra & \la f_L,g_2 \ra & \cdots & 1 - \la f_L,g_L \ra
                                       \end{array} \right) \left( c_{jk} \right)_{k = 1, \cdots, L}^T \]

Let $C = \left( c_{jk} \right)_{j,k}$. Combining the equations for all $j$ gives
$$ I = (I - M^T)C^T $$  where $M = G(\{f_1,...,f_L\}, \{g_1,...,g_L\})$.  So
$C(I - M) = I$.

We will show that under our hypothesis that $\{f_1, \cdots f_L\}$ is linearly independent the matrix $I-M$ is invertible, so this system has a unique solution.  This will yield a valid choice of the $c_{jk}$.  If $I - M$ were singular then $1$ would be an eigenvalue of $M$.
So there would exist a nonzero vector
$x = (x_k)_{k=1}^L \in \C^n$ so that $Mx = x$.  Computing gives $\sum_{j=1}^L \la f_j,g_k \ra x_j = x_k$ for each $k$.   Let $z = \sum_{j=1}^L x_j f_j$.  Since $x$ is nonzero not all of the $x_j$ are zero.  By hypothesis $\{f_1, \cdots f_L\}$ is linearly independent, so $z$ cannot be the zero vector.
Compute:
\[ Rz = z - \sum_{k=1}^L \la z,g_k \ra f_k = z - \sum_{k=1}^L \sum_{j=1}^L x_j \la f_j,g_k \ra f_k = z - \sum_{k=1}^L x_k f_k = z - z = 0. \]
So $z$ is in the kernel of $R$ contradicting our hypothesis that $R$ is invertible.  Thus $I-M$ is a nonsingular matrix, and the system has the unique solution $C = (I - M)^{-1}$ as claimed.

\end{proof}

\begin{remark}

In order to apply the above theorem to inverting a frame partial reconstruction operator, if $\{f_j: j \in \Lambda\}$ is not linearly independent one must first use linearity of the elementary tensors $f \otimes g$ in the first component and conjugate linearity in the second component to precondition $R$ to the form
$I - \sum_{j = 1}^L f'_j \otimes g'_j$ with the first component set $\{f'_j: j \in \Lambda\}$ linearly independent.  In many cases this will be simple and even automatic, but in other cases this may be computationally expensive.
The main point is that if $R$ is invertible, the computation above, perhaps with preconditioning, always yields a formula for the inverse.  Furthermore, it may be useful to note that since we are solving a matrix equation, it follows that the coefficients  $c_{jk}$ are given by rational functions of the $\la f_j,g_k \ra$. In this sense the formula is indeed basis-free.
\end{remark}

\section{Concluding Remarks}
The main results in this article are presented for reconstruction from finite erasure subsets of frames, so much of our theory is finite dimensional.  However, the reconstruction results can be applied to finite subsets of infinite frames, including the well known classes of Gabor (Weyl-Heisenberg) frames, Laurent frames, infinite group frames, and wavelet frames, as well as abstract sampling theory.  There may be applications to the pure and applied aspects of these classes, including classification results.  In fact, our initial computer experiments suggest to us that many of these natural classes of infinite frames may be full skew spark in the sense that they have skew spark k for all finite k.  But mathematical proofs of general theorems on this have eluded us so far. In addition, there may be applications to the three closely related topics that deal with frames in {\it blocks}: operator-valued frames, fusion frames, and G-frames (c.f.  \cite{KLZ}, \cite{CKL}, \cite{Su}).  Finally, we should mention
that we expect that there will be applications to the more abstract theories: frames for Banach spaces and related topics of Banach frames, atomic decompositions, and framings (c.f \cite{CHL}), the theory of frames for Hilbert C*-modules, and in the purely algebraic direction: frames for other fields such as p-adic frames and binary frames  (c.f. \cite{HLS}).

\bigskip
\bigskip

\end{document}